\newcommand{\R}{{\mathbb R}}
\newcommand{\C}{{\mathbb C}}
\newcommand{\be}{\begin{eqnarray}}
\newcommand{\ben}{\begin{eqnarray*}}
\newcommand{\en}{\end{eqnarray}}
\newcommand{\enn}{\end{eqnarray*}}
\newcommand{\ba}{\backslash}
\newcommand{\pa}{\partial}
\newcommand{\ov}{\overline}
\newcommand{\Om}{\Omega}
\newcommand{\la}{\lambda}
\newcommand{\hx}{\hat{x}}
\newtheorem{theorem}{Theorem}[section]
\newtheorem{remark}[theorem]{Remark}
\title{Data recovery: from limited-aperture to full-aperture}
\author{
Xiaodong Liu\thanks{Institute of Applied Mathematics, Academy of Mathematics and Systems Science, Chinese Academy of Sciences, 100190 Beijing, China.  Email: xdliu@amt.ac.cn}
\quad and\quad
Jiguang Sun\thanks{Department of Mathematical Sciences, Michigan Technological University and College of Mathematical Sciences, University of Electronic Science and Technology of China.
Email: jiguangs@mtu.edu}
}
\begin{document}
\maketitle

\begin{abstract}
The inverse scattering problems have been popular for the past thirty years.
While very successful in many cases, progress has lagged when only {\em limited-aperture} measurement is available.
In this paper, we perform some elementary study to recover data that can not be measured directly.
To be precise, we aim at recovering the {\em full-aperture} far field data from  {\em limited-aperture} measurement.
Due to the reciprocity relation, the multi-static response matrix (MSR) has a symmetric structure.
Using the Green's formula and single layer potential, we propose two schemes to recover {\em full-aperture} MSR.
The recovered data is tested by a recently proposed direct sampling method and the factorization method.
The numerical results show the possibility to recover, at least partially, the missing data and consequently improve the reconstruction of the scatterer.

\vspace{.2in} {\bf Keywords:}
inverse scattering, multi-static response matrix, limited-aperture, data recovery.

\vspace{.2in} {\bf AMS subject classifications:}
35P25, 35Q30, 45Q05, 78A46

\end{abstract}

\section{Introduction}
\setcounter{equation}{0}
The inverse scattering theory has been a fast-developing area for the past thirty years. The aim is to detect and identify the
unknown objects using acoustic, electromagnetic, or elastic waves.
Many methods have been proposed, e.g., iterative methods, decomposition methods, the linear sampling method, the factorization method and direct sampling methods
\cite{CC2014, Chen, CCHuang, ColtonKirsch, ColtonMonk, ItoJinZou, Kirsch98, KirschGrinberg, KirschKress, LiuIP17, LiuZhang2015, Potthast2010, Sun2012IP}.
Most of the above algorithms use {\em full-aperture} data, i.e., data of  all the observation directions due to all incident directions.
However, in many cases of practical interest, it is not possible to measure the {\em full-aperture} data, e.g., underground mineral
prospection, mine location in the battlefield, and anti-submarine detection.
Consequently, only {\em limited-aperture} data over a range of angles are available.

Various reconstruction algorithms using {\em limited-aperture} data have been developed \cite{AhnJeonMaPark, BaoLiu, IkehataNiemiSiltanen, Robert1987, KirschGrinberg, L4, MagerBleistein,MagerBleistein1978,Zinn1989}.
Although uniqueness of the inverse problems can be proved in some cases \cite{CK}, the quality of the reconstructions are not satisfactory.
Indeed, {\em limited-aperture} data put forward a severe challenge for all the existing numerical methods.
A typical feature is that the "shadow region" is elongated in down range \cite{L4}.
Physically, the information from the "shadow region" is very weak, especially for high frequency waves \cite{MagerBleistein}.
For two-dimensional problems, the numerical
experiments of the decomposition methods  in \cite{Robert1987,Zinn1989} indicate that satisfactory reconstructions need an aperture not smaller than 180 degrees.

Other than developing methods using  {\em limited-aperture} data, we take an alternative approach to recover the data that can not be measured directly.
As a consequence, methods using {\em full-aperture} data can be employed.
We take the acoustic scattering by time-harmonic plane waves as the model problem.
The measurement data are only available for limited-aperture observation angles but for all incident directions.
The goal is to recover data for all observation angles.
The case to recover full-aperture data from limited-aperture observation angles due to limited-aperture incident directions will be considered in future.

For scattering problems, it is well-known that the full-aperture data can be uniquely determined by the limited-aperture data.
However, because of the severely ill-posed nature of the analytic continuation, it is in general not possible to recover full-aperture data using techniques such as extrapolation \cite{Atkinson}.
We take a different way by seeking an analytic function in a suitable space based on the PDE theory governing the scattering problem. More precisely, we look for kernels of layer potentials that generate the measured data approximately by regularization.
Then these kernels are used to obtain the full-aperture data.

This paper is organized as follows. In Section 2, we briefly introduce the scattering problem of interests and the multi-static response (MSR) matrix, which is the far field pattern.
Due to the reciprocity relation of the far field pattern, the MSR has a symmetry property,
which can be used to recover partial missing data.
In Section \ref{sec2.2}, we propose a technique using the Green's formula to recover the full MSR.
Another recovery technique based on the single layer potential is proposed in subsection \ref{sec2.3}.
Combining these techniques and the symmetry property, a novel algorithm is proposed to recover the {\em full-aperture} MSR.
In Section \ref{NEs}, numerical examples are presented to demonstrate the performance of the data recover techniques.
The recovered data are tested using a direct sampling method and the factorization method.
We draw some conclusions and discuss future works in Section 5.

\section{The Multi-static Response Matrix}
\label{sec2}
\setcounter{equation}{0}
Let $k$ be the wave number of a time harmonic wave and $\Om\subset\R^n (n=2,\, 3)$ be a bounded domain with
Lipschitz-boundary $\pa\Om$ such that the exterior $\R^n\ba\ov{\Om}$ is connected.
Let the incident field $u^i$ be given by
\be\label{incidenwave}
u^i(x)\ =\ u^i(x;d) = e^{ikx\cdot d},\quad x\in\R^n\,,
\en
where $d\in S^{n-1}, S^{n-1}:=\{x\in\R^n:|x|=1\}$, denotes the direction of the plane wave.

The scattering problem for an inhomogeneous medium is to find the total field $u=u^i+u^s$ such that
\be
\label{HemEqumedium}\Delta u + k^2 (1+q)u = 0\quad \mbox{in }\R^n,\\
\label{Srcmedium}\lim_{r:=|x|\rightarrow\infty}r^{\frac{n-1}{2}}\left(\frac{\pa u^{s}}{\pa r}-iku^{s}\right) =\,0,
\en
where $q\in L^{\infty}(\R^n)$ such that its imaginary part $\Im (q)\geq 0$ and $q=0$ in $\R^n\ba\ov{\Om}$.
The Sommerfeld radiation condition \eqref{Srcmedium} holds uniformly with respect to all directions $\hx:=x/|x|\in S^{n-1}$.
If the scatterer $\Om$ is impenetrable, the direct scattering problem is to find the total field $u=u^i+u^s$ such that
\be
\label{HemEquobstacle}\Delta u + k^2 u = 0\quad \mbox{in }\R^n\ba\ov{\Om},\\
\label{Bc}\mathcal{B}(u) = 0\quad\mbox{on }\pa\Om,\\
\label{Srcobstacle}\lim_{r:=|x|\rightarrow\infty}r^{\frac{n-1}{2}}\left(\frac{\pa u^{s}}{\pa r}-iku^{s}\right) =\,0,
\en
where $\mathcal{B}$ denotes one of the following three boundary conditions:
\ben
(1)\,\mathcal{B}(u):=u\,\,\mbox{on}\ \pa \Om;\qquad
(2)\,\mathcal{B}(u):=\frac{\pa u}{\pa\nu}\,\,\mbox{on}\ \pa \Om;\qquad
(3)\,\mathcal{B}(u):=\frac{\pa u}{\pa\nu}+\la u\,\,\mbox{on}\ \pa \Om
\enn
corresponding, respectively, to the cases when the scatterer $\Omega$ is sound-soft, sound-hard, and of the impedance type.
Here, $\nu$ is the unit outward normal to $\pa\Om$ and $\la\in L^{\infty}(\pa \Om)$ is the (complex valued) impedance function such that $\Im(\la)\geq0$ almost everywhere on $\pa \Om$.
Uniqueness of the scattering problems \eqref{HemEqumedium}--\eqref{Srcmedium} and \eqref{HemEquobstacle}--\eqref{Srcobstacle}
can be shown with the help of Green's theorem, Rellich's lemma and unique continuation principle, see e.g., \cite{CK}. The proof of existence can be done
by variational approaches (cf. \cite{CK, Mclean} for the Dirichlet boundary condition and \cite{CC2014} for other boundary conditions) or by integral equation methods
(cf. \cite{CK}).

Radiating solutions of the Helmholtz equation have the following asymptotic behavior \cite{KirschGrinberg, LiuIP17}:
\be\label{0asyrep}
u^s(x;d)
=\frac{e^{i\frac{\pi}{4}}}{\sqrt{8k\pi}}\left(e^{-i\frac{\pi}{4}}\sqrt{\frac{k}{2\pi}}\right)^{n-2}\frac{e^{ikr}}{r^{\frac{n-1}{2}}}\left\{u^{\infty}(\hat{x};d)+\mathcal{O}\left(\frac{1}{r}\right)\right\}\quad\mbox{as }\,r:=|x|\rightarrow\infty
\en
uniformly with respect to all directions $\hx:=x/|x|\in S^{n-1}$.
The complex valued function $u^\infty(\hat{x})=u^\infty(\hat{x};d)$ defined on the unit sphere $S^{n-1}$
is known as the scattering amplitude or far-field pattern with $\hat{x}\in S^{n-1}$ denoting the observation direction.

It is well known that the scatterer $\Om$ can be uniquely determined by the far field pattern $u^\infty(\hx,d)$ for all $\hx, d\in S^{n-1}$ \cite{CK}.
Due to analyticity, $u^\infty(\hx,d)$ for $(\hx, d) \in S^{n-1} \times S^{n-1}$ is uniquely determined by
 $u^\infty(\hx,d)$ for $(\hx, d) \in S_{0}^{n-1} \times S^{n-1}$ if $S_{0}^{n-1}\subsetneq S^{n-1}$ has a nonempty interior.
Unfortunately, it is practically impossible to obtain $u^\infty(\hx,d)$ on $S^{n-1}$ from $u^\infty(\hx,d)$ on $S_{0}^{n-1}$ using the analytic continuation (see Atkinson \cite{Atkinson}).

In this paper, we consider the discrete version of $u^\infty(\hx,d)$, i.e., the
 multi-static response (MSR) matrix in $\R^2$.
Let $\theta_i:=(i-1)\pi/m,\,i=1,2,\ldots,2m$,
\ben
d_i:=(\cos\theta_i, \sin\theta_i),\quad i=1,2,\ldots,2m,
\enn
and
\ben
\hat{x}_j:=(\cos\theta_j, \sin\theta_j),\quad j=1,2,\ldots,2m.
\enn
The multi-static response (MSR) matrix $\mathbb{F}_{full}\in \C^{2m\times 2m}$ is defined as
\be\label{MSR}
\mathbb{F}_{full}
:= \left(
    \begin{array}{cccc}
      u_{1,1}^\infty\quad u_{1,2}^\infty\quad \cdots\quad u_{1,2m}^\infty \\
      u_{2,1}^\infty\quad u_{2,2}^\infty\quad \cdots\quad u_{2,2m}^\infty \\
      \vdots\,\qquad \vdots\,\quad \ddots\,\qquad \vdots \\
      u_{2m,1}^\infty\,\, u_{2m,2}^\infty\,\, \cdots\quad u_{2m,2m}^\infty \\
      \end{array}
  \right),
\en
where $u^{\infty}_{i,j}=u^\infty(\hat{x}_j;d_i)$ for $1\leq i, j\leq 2m$ corresponding to $2m$ observation directions $\hat{x}_j$ and $2m$ incident directions $d_i$.

Assume that the far field pattern can only be measured in a {\em limited-aperture}. In particular,
the measured data are the first $l$ columns of $\mathbb{F}_{full}$
\be\label{MSR-l}
\mathbb{F}^{(l)}_{limit}
:= \left(
    \begin{array}{cccc}
      u_{1,1}^\infty\, u_{1,2}^\infty\, \cdots\, u_{1,l}^\infty \\
      u_{2,1}^\infty\, u_{2,2}^\infty\, \cdots\, u_{2,l}^\infty \\
      \vdots\,\quad \vdots\,\quad \ddots\,\quad \vdots \\
      u_{2m,1}^\infty\, u_{2m,2}^\infty\, \cdots\, u_{2m,l}^\infty \\
      \end{array}
  \right), \quad 1\leq l<2m.
\en
The inverse problem considered in this paper is to firstly recover $\mathbb{F}_{full}$ from $\mathbb{F}^{(l)}_{limit}$, and then reconstruct the scatterer $\Om$ from the recovered $\mathbb{F}_{full}$. Note that $\mathbb{F}_{full}$ is NOT symmetric, i.e., $\mathbb{F}_{full}\neq\mathbb{F}_{full}^T$.
Here and throughout the paper we use the superscript $"T"$ to denote the transpose of a matrix.
We can partition the $2m$-by-$2m$ MSR matrix $\mathbb{F}_{full}$ into a $2$-by-$2$ block matrix
\be\label{Fpartition}
\mathbb{F}_{full}=\left(
             \begin{array}{cc}
               \mathbb{F}_{11} & \mathbb{F}_{12} \\
               \mathbb{F}_{21} & \mathbb{F}_{22} \\
             \end{array}
           \right),
\en
where $\mathbb{F}_{ij}\in \C^{m\times m}, i,j=1,2$. The following theorem is a consequence of the reciprocity relation.

\begin{theorem}\label{blocksymmetric}
$\mathbb{F}_{11}=\mathbb{F}_{22}^{T},\quad \mathbb{F}_{12}=\mathbb{F}_{12}^{T}$ and $\mathbb{F}_{21}=\mathbb{F}_{21}^{T}$.
\end{theorem}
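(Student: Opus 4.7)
The plan is to derive all three identities directly from the far-field reciprocity relation
\[
u^\infty(\hat{x};d)\ =\ u^\infty(-d;-\hat{x}),\qquad \hat{x},d\in S^{n-1},
\]
which holds for each of the boundary/transmission problems under consideration (see, e.g., Colton--Kress \cite{CK}). This is the only analytic input; the rest is bookkeeping on how the discrete grid $\{\theta_i=(i-1)\pi/m\}_{i=1}^{2m}$ interacts with the antipodal map.

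First I would record the key consequence of the chosen discretization: because $\theta_{m+i}=\theta_i+\pi$, the directions satisfy
\[
d_{m+i}\ =\ -d_i,\qquad \hat{x}_{m+i}\ =\ -\hat{x}_i,\qquad 1\leq i\leq m,
\]
and, since the observation and incidence nodes coincide as points on $S^1$, one also has $d_k=\hat{x}_k$ for every $k$. This is what makes the antipodal symmetry of reciprocity translate into a block symmetry of $\mathbb{F}_{full}$.

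Next I would verify each block identity in turn by substituting the reciprocity relation into the definition $u^\infty_{i,j}=u^\infty(\hat{x}_j;d_i)$. For $1\leq i,j\leq m$,
\[
(\mathbb{F}_{11})_{i,j}=u^\infty(\hat{x}_j;d_i)=u^\infty(-d_i;-\hat{x}_j)=u^\infty(\hat{x}_{m+i};d_{m+j})=(\mathbb{F}_{22})_{j,i},
\]
which gives $\mathbb{F}_{11}=\mathbb{F}_{22}^T$. For the off-diagonal blocks I would use the same substitution, but now one of the shifted indices falls back into $\{1,\dots,m\}$ because $-(-\hat{x}_j)=\hat{x}_j$; explicitly,
\[
(\mathbb{F}_{12})_{i,j}=u^\infty(\hat{x}_{m+j};d_i)=u^\infty(-d_i;-\hat{x}_{m+j})=u^\infty(\hat{x}_{m+i};d_j)=(\mathbb{F}_{12})_{j,i},
\]
and analogously
\[
(\mathbb{F}_{21})_{i,j}=u^\infty(\hat{x}_j;d_{m+i})=u^\infty(-d_{m+i};-\hat{x}_j)=u^\infty(\hat{x}_i;d_{m+j})=(\mathbb{F}_{21})_{j,i},
\]
so $\mathbb{F}_{12}=\mathbb{F}_{12}^T$ and $\mathbb{F}_{21}=\mathbb{F}_{21}^T$.

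There is no real analytic obstacle once reciprocity is taken as given; the only thing that needs care is the index arithmetic, in particular tracking which shift $i\mapsto m+i$ lands in the ``first half'' and which in the ``second half'' of $\{1,\dots,2m\}$. Checking the off-diagonal blocks first at the level of continuous arguments $\hat{x}\leftrightarrow -d$ and then specializing to the grid makes this translation transparent and avoids any mod-$2m$ confusion.
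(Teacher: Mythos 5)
Your proposal is correct and follows essentially the same route as the paper: both derive all three block identities from the reciprocity relation $u^\infty(\hat{x};d)=u^\infty(-d;-\hat{x})$ combined with the observation that the grid satisfies $d_{m+i}=-d_i$ and $\hat{x}_{m+i}=-\hat{x}_i$. The only cosmetic difference is that the paper carries out the antipodal identification via explicit trigonometric identities $(\cos(\theta_i+\pi),\sin(\theta_i+\pi))$, whereas you state it directly; the index bookkeeping is identical and correct.
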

\begin{proof}
Recall that the far field pattern is the same if the direction of the incident field and the observation direction are interchanged \cite{CK}, i.e.,
\be\label{reciprocityrelation}
u^{\infty}(\hat{x},d)= u^{\infty}(-d,-\hat{x}),\quad \mbox{for all}\,\, \hat{x},d\in S^{1}.
\en
For all $u^{\infty}_{i,j}\in \mathbb{F}_{11}$, using the reciprocity relation \eqref{reciprocityrelation}, we have
\ben
u^{\infty}_{i,j}
&=&u^\infty(\hat{x}_j;d_i)\cr
&=&u^\infty(-d_i;-\hat{x}_j)\cr
&=&u^\infty(-(\cos\theta_i,\sin\theta_i);-(\cos\theta_j,\sin\theta_j))\cr
&=&u^\infty((\cos(\theta_i+\pi),\sin(\theta_i+\pi)); (\cos(\theta_j+\pi),\sin(\theta_j+\pi)))\cr
&=&u^\infty((\cos\theta_{i+m},\sin\theta_{i+m}); (\cos\theta_{j+m},\sin\theta_{j+m}))\cr
&=&u^\infty_{j+m,i+m},\quad 1\leq i, j\leq m.
\enn
Thus, we have $\mathbb{F}_{11}=\mathbb{F}_{22}^{T}$.

Similarly, For all $u^{\infty}_{i,j+m}\in \mathbb{F}_{12}$, using the reciprocity relation \eqref{reciprocityrelation} again, we have
\ben
u^{\infty}_{i,j+m}
&=&u^\infty(\hat{x}_{j+m};d_i)\cr
&=&u^\infty(-d_i;-\hat{x}_{j+m})\cr
&=&u^\infty(-(\cos\theta_i,\sin\theta_i);-(\cos\theta_{j+m},\sin\theta_{j+m}))\cr
&=&u^\infty((\cos(\theta_i+\pi),\sin(\theta_i+\pi)); (\cos(\theta_{j+m}+\pi),\sin(\theta_{j+m}+\pi)))\cr
&=&u^\infty((\cos\theta_{i+m},\sin\theta_{i+m}); (\cos\theta_{j},\sin\theta_{j}))\cr
&=&u^\infty_{j,i+m},\quad 1\leq i, j\leq m.
\enn
Thus, we have $\mathbb{F}_{12}=\mathbb{F}_{12}^{T}$. The equality $\mathbb{F}_{21}=\mathbb{F}_{21}^{T}$ can be treated analogously.
\end{proof}
As a direct consequence of Theorem \ref{blocksymmetric}, the following data
\be\label{MSR-2}
\widetilde{\mathbb{F}}^{(l)}_{limit}
:= \left(
    \begin{array}{cccc}
      u_{m+1,l+1}^\infty\, u_{m+1,l+2}^\infty\, \cdots\, u_{m+1,2m}^\infty \\
      u_{m+2,l+1}^\infty\, u_{m+2,l+2}^\infty\, \cdots\, u_{m+2,2m}^\infty \\
      \vdots\,\qquad \vdots\,\qquad \ddots\,\qquad \vdots \\
      u_{m+l,l+1}^\infty\, u_{m+l,l+2}^\infty\, \cdots\, u_{m+l,2m}^\infty \\
      \end{array}
  \right), \quad 1\leq l<2m,
\en
can be obtained directly from $\mathbb{F}^{(l)}_{limit}$. Here, we have set $u_{i,j}^\infty:=u_{i-2m,j}^\infty$ if $i>2m, 1\leq j<2m$.

\begin{remark}
If we set
\ben
\widehat{\mathbb{F}}_{full}:=\left(
             \begin{array}{cc}
               \mathbb{F}_{12} & \mathbb{F}_{11} \\
               \mathbb{F}_{22} & \mathbb{F}_{21} \\
             \end{array}
           \right),
\enn
then $\widehat{\mathbb{F}}_{full}$ is symmetric, i.e., $\widehat{\mathbb{F}}_{full}=\widehat{\mathbb{F}}_{full}^{T}$.
The result also holds for phaseless MSR matrix.
\end{remark}

\section{Data Recover Schemes}\label{DRT}
In this section, we propose two methods to recover $\mathbb{F}_{full}$ from $\mathbb{F}^{(l)}_{limit}$.
The first one is based on the Green's formula. The second one is based on the single layer potential.

\subsection{Method of Green's Formula}\label{sec2.2}
Let $B$ be a bounded domain with connected complement such that $\ov{\Om}\subset B$ and the boundary $\pa B$ is of class $C^2$.
Let $\nu$ denote the unit normal vector to the boundary $\pa B$ directed into the exterior of $B$.
The fundamental solution $\Phi(x,y), x, y \in \R^2, x\neq y,$ of the Helmholtz equation is given by
\be\label{Phi}
\Phi(x,y):=\frac{i}{4}H^{(1)}_0(k|x-y|),
\en
where $H^{(1)}_0$ is the Hankel function of the first kind of order zero.
The scattered field $u^s(\cdot; d)$ is a radiating solution to the Helmholtz equation in $\R^2\ba\ov{B}$ such that the Green's formula holds \cite{CK}
\be\label{GreenFormula}
u^s(x;d) = \int_{\pa B}\left\{u^s(y;d)\frac{\pa\Phi(x,y)}{\pa\nu(y)}-\frac{\pa u^s(y;d)}{\pa\nu(y)}\Phi(x,y)\right\}ds(y), \quad x\in \R^2\ba\ov{B}.
\en
Letting $x$ tend to the boundary $\pa B$ and using the jump relations, it can be shown that
\[
(\phi,\psi):=\Big(u^s, \frac{\pa u^s}{\pa\nu}\Big)\Big|_{\pa B}\in H^{1/2}(\pa B)\times H^{-1/2}(\pa B)
\]
solves the following boundary integral equations
\be
\label{phipsi1} \phi(x)=2\int_{\pa B}\left\{\phi(y)\frac{\pa\Phi(x,y)}{\pa\nu(y)}-\psi(y)\Phi(x,y)\right\}ds(y), \quad x\in \pa B,\\
\label{phipsi2} \psi(x)=2\frac{\pa}{\pa\nu(x)}\int_{\pa B}\left\{\phi(y)\frac{\pa\Phi(x,y)}{\pa\nu(y)}-\psi(y)\Phi(x,y)\right\}ds(y), \quad x\in \pa B.
\en
For later use, we define the space
\ben
W:=\{(\phi,\psi)\in H^{1/2}(\pa B)\times H^{-1/2}(\pa B): (\phi,\psi)\,\mbox{ is a solution to}\, \eqref{phipsi1}-\eqref{phipsi2}.\}.
\enn

Using the Green's formula \eqref{GreenFormula}, $u^{\infty}(\cdot;d)$ has the following form (cf. \cite{KirschGrinberg})
\be\label{scatteringamplitude}
u^{\infty}(\hat{x};d)=\int_{\pa B}\left\{u^s(y;d)\frac{\pa e^{-ik\hat{x}\cdot y}}{\pa\nu(y)}-\frac{\pa u^s}{\pa\nu}(y; d)e^{-ik\hat{x}\cdot y}\right\}ds(y),\quad \hat{x}\in S^{1}.
\en
Note that the Cauchy data $\Big(u^s, \frac{\pa u^s}{\pa\nu}\Big)\Big|_{\pa B}$ is independent of the variable $\hat{x}$.
From \eqref{scatteringamplitude} the far field pattern can be computed in any direction if the Cauchy data $\Big(u^s, \frac{\pa u^s}{\pa\nu}\Big)\Big|_{\pa B}$ is known.

Let $S_0^{1}$ be the measurement surface, which is an open subset of the unit sphere $S^{1}$ with nonempty interior (open relative to $S^{1}$).
If we already know the far field pattern in $S_0^{1}$,
then it is natural to approximate the Cauchy data $\Big(u^s, \frac{\pa u^s}{\pa\nu}\Big)\Big|_{\pa B}$ by solving the following integral equation
\be\label{Cauchysolveequation}
F(\phi(\cdot;d),\psi(\cdot;d))(\hat{x})=u^{\infty}(\hat{x};d),\quad \hat{x}\in \,S_0^{1},
\en
where $F: W\rightarrow L^2(S_0^{1})$ is defined by
\be\label{F}
F(\phi(\cdot;d),\psi(\cdot;d))(\hat{x}):= \int_{\pa B}\left\{\phi(y;d)\frac{\pa e^{-ik\hat{x}\cdot y}}{\pa\nu(y)}-\psi(y;d)e^{-ik\hat{x}\cdot y}\right\}ds(y),\quad \hat{x}\in S_0^{1}.
\en
\begin{theorem}\label{Fproperty}
The operator $F: W\rightarrow L^2(S_0^{1})$ is compact, injective with dense range in $L^2(S_0^{1})$.
\end{theorem}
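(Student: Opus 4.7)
The plan is to establish the three properties separately: compactness via smoothness of the kernel, injectivity via analytic continuation combined with Rellich's lemma, and dense range via a duality argument centred on Herglotz wave functions and Green's representation formula.

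For \textbf{compactness}, I would note that the two kernel functions $(\hx,y)\mapsto e^{-ik\hx\cdot y}$ and $(\hx,y)\mapsto \pa_{\nu(y)}e^{-ik\hx\cdot y}$ are real-analytic on $S^1\times\pa B$. Because differentiation in $\hx$ preserves smoothness and because the $H^{1/2}$- and $H^{-1/2}$-duality pairings against a smooth kernel are continuous, $F$ maps $H^{1/2}(\pa B)\times H^{-1/2}(\pa B)$, and hence $W$, boundedly into $H^s(S_0^1)$ for every $s\geq 0$. Choosing $s=1$ and using the compact Rellich-Kondrachov embedding $H^1(S_0^1)\hookrightarrow L^2(S_0^1)$ yields the compactness of $F$.

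For \textbf{injectivity}, suppose $(\phi,\psi)\in W$ with $F(\phi,\psi)=0$ on $S_0^1$. Because $\hx\mapsto F(\phi,\psi)(\hx)$ is the restriction to $S_0^1$ of a real-analytic function on $S^1$, it vanishes on all of $S^1$. Define
\[
v(x):=\int_{\pa B}\left\{\phi(y)\frac{\pa\Phi(x,y)}{\pa\nu(y)}-\psi(y)\Phi(x,y)\right\}ds(y),\quad x\in\R^2\ba\pa B.
\]
Then $v$ is a radiating solution of the Helmholtz equation in $\R^2\ba\ov{B}$, and its far field pattern equals $F(\phi,\psi)\equiv 0$ on $S^1$. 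Rellich's lemma forces $v\equiv 0$ in $\R^2\ba\ov{B}$. The jump relations for the double- and single-layer potentials, combined with the defining relations \eqref{phipsi1}--\eqref{phipsi2}, show that the exterior trace $v^+$ and the exterior normal derivative $\pa_\nu v^+$ on $\pa B$ reproduce $\phi$ and $\psi$, respectively; hence $\phi=0$ and $\psi=0$.

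For \textbf{dense range}, I would dualize. Let $g\in L^2(S_0^1)$ be orthogonal to the range of $F$. A Fubini-type interchange rewrites the orthogonality as
\[
\langle\phi,\pa_\nu w\rangle_{\pa B}-\langle\psi,w\rangle_{\pa B}=0\quad\text{for all }(\phi,\psi)\in W,
\]
with $w(y):=\int_{S_0^1}e^{-ik\hx\cdot y}\overline{g(\hx)}ds(\hx)$ an entire Herglotz wave function on $\R^2$. As test Cauchy data I choose $(\Phi(\cdot,z),\pa_\nu\Phi(\cdot,z))|_{\pa B}$ for arbitrary $z\in B$: these belong to $W$ because $\Phi(\cdot,z)$ is a radiating solution in $\R^2\ba\ov{B}$ and thus satisfies \eqref{GreenFormula}. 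The resulting identity is precisely Green's representation formula for $w$ on the interior domain $B$, giving $w(z)=0$ for every $z\in B$. Unique continuation then yields $w\equiv 0$ on $\R^2$, and the injectivity of the Herglotz operator on $L^2(S^1)$, applied to $g$ extended by zero outside $S_0^1$, gives $g\equiv 0$.

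The main obstacle I anticipate is the dense-range step, specifically justifying the Fubini interchange when $\psi$ is only an $H^{-1/2}$-distribution paired against the smooth kernel, and matching the sign conventions between the orthogonality condition and the interior Green's representation formula so that the pointwise identity $w(z)=0$ genuinely emerges. The compactness and injectivity parts are comparatively routine once the layer-potential framework is in place.
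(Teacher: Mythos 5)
Your proposal is correct. The compactness and injectivity parts follow the paper's proof essentially verbatim: compactness from the analyticity of the kernel, and injectivity by extending the vanishing of the far field from $S_0^{1}$ to $S^{1}$ by analyticity, applying Rellich's lemma to the combined layer potential, and then combining the exterior jump relations with the defining equations \eqref{phipsi1}--\eqref{phipsi2} of $W$ to get $\phi=\psi=0$. Where you genuinely diverge is the dense-range step. The paper computes the adjoint $F^{\ast}h=(\pa_\nu v_h,\,-v_h)|_{\pa B}$ as an element of the full product space $H^{-1/2}(\pa B)\times H^{1/2}(\pa B)$, assumes it vanishes there, and concludes via Holmgren's theorem that the Herglotz function $v_h$ is identically zero. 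You instead test the orthogonality relation only against the particular elements $(\Phi(\cdot,z),\pa_\nu\Phi(\cdot,z))|_{\pa B}$ for $z\in B$ --- which do lie in $W$, since $\Phi(\cdot,z)$ is a radiating solution in $\R^2\ba\ov{B}$ --- and read the resulting identity as the interior Green representation formula, yielding $w(z)=0$ pointwise in $B$, then finish by analytic continuation and injectivity of the Herglotz operator. Your route is in fact the more careful one: orthogonality of $g$ to the range of $F$ only gives $\langle(\phi,\psi),F^{\ast}g\rangle=0$ for $(\phi,\psi)$ in the subspace $W$, not for all of $H^{1/2}(\pa B)\times H^{-1/2}(\pa B)$, so the paper's inference that \emph{both} components of $F^{\ast}g$ vanish on $\pa B$ requires an extra justification that your point-source choice sidesteps entirely. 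The costs you correctly anticipate --- justifying the Fubini interchange against an $H^{-1/2}$ density (harmless, since the kernel is smooth in $y$) and tracking sign and conjugation conventions so that the interior Green formula emerges --- are minor and do not affect the validity of the argument.
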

\begin{proof}
The operator $F$ is certainly compact since its kernel is analytic in both variables.

Let $(\phi,\psi)\in W$ satisfy $F(\phi(\cdot;d),\psi(\cdot;d))(\hat{x})=0$ in $S_0^{1}$.
By analyticity, we have
\be\label{farfield0}
\int_{\pa B}\left\{\phi(y;d)\frac{\pa e^{-ik\hat{x}\cdot y}}{\pa\nu(y)}-\psi(y;d)e^{-ik\hat{x}\cdot y}\right\}ds(y)=0,\quad \hat{x}\in S^{1}.
\en
Note that the left hand side of \eqref{farfield0} is actually the far field pattern of the scattered field $w^s$ given by
\ben
w^s(x):=\int_{\pa B}\left\{\phi(y;d)\frac{\pa\Phi(x,y)}{\pa\nu(y)}-\psi(y;d)\Phi(x,y)\right\}ds(y), \quad x\in \R^2\ba\ov{B}.
\enn
By Rellich's lemma and \eqref{farfield0}, $w^s$ vanishes in $\R^2\ba\ov{B}$. Now, jump relations yield
\ben
0=\frac{1}{2}\phi(x)+\int_{\pa B}\left\{\phi(y)\frac{\pa\Phi(x,y)}{\pa\nu(y)}-\psi(y)\Phi(x,y)\right\}ds(y), \quad x\in \pa B,\\
0=\frac{1}{2}\psi(x)+\frac{\pa}{\pa\nu(x)}\int_{\pa B}\left\{\phi(y)\frac{\pa\Phi(x,y)}{\pa\nu(y)}-\psi(y)\Phi(x,y)\right\}ds(y), \quad x\in \pa B.
\enn
Recall that $(\phi,\psi)\in W$, which implies that $(\phi,\psi)$ is also a solution of \eqref{phipsi1}-\eqref{phipsi2}.
Hence, $\phi=\psi=0$ and $F$ is injective.

We consider the adjoint $F^{\ast}$ of $F$ and show that it is injective as well which proves the denseness of the range of $F$.
For all $h\in L^2(S_0^{1})$ we extend $h$ by zero in $S^{1}\ba S_0^{1}$ to obtain $h\in L^2(S^{1})$. Recall the Herglotz wave function $v_h$ of the form
\ben
v_h(y):=\int_{S^{1}} e^{iky\cdot\hat{x}}h(\hat{x})ds(\hat{x}),\quad y\in \R^2.
\enn
Then we obtain that the adjoint operator $F^{\ast}: L^2(S_0^{1})\rightarrow H^{-1/2}(\pa B)\times H^{1/2}(\pa B)$ is given by
\be\label{Fast}
F^{\ast}h=\left(\frac{\pa v_h}{\pa\nu}, \;\;-v_h\right)\Big|_{\pa B}.
\en
Interchanging the order of integration, we have
\ben
(F(\phi,\psi), h)_{L^2(S_0^{1})}
&=&\int_{S_0^{1}}\int_{\pa B}\left\{\phi(y;d)\frac{\pa e^{-ik\hat{x}\cdot y}}{\pa\nu(y)}-\psi(y;d)e^{-ik\hat{x}\cdot y}\right\}ds(y)\ov{h(\hat{x})}ds(\hat{x})\cr
&=&\int_{\pa B}\left\{\phi(y;d)\frac{\pa \ov{v_h(y)}}{\pa\nu(y)}-\psi(y;d)\ov{v_h(y)}\right\}ds(y)\cr
&=&\Big\langle\Big(\phi,\psi\Big), \Big(\frac{\pa v_h}{\pa\nu},-v_h\Big)\Big\rangle\cr
&=&\langle(\phi,\psi), F^{\ast}h\rangle,
\enn
where the last two equalities hold in the sense of dual paring $\langle H^{1/2}(\pa B)\times H^{-1/2}(\pa B), H^{-1/2}(\pa B)\times H^{1/2}(\pa B)\rangle$.
Here and in the following, $\ov{z}$ denotes the complex conjugate of $z\in\C$.

We proceed by showing that the adjoint operator $F^{\ast}$ is injective. Let $h\in L^2(S_0^{1})$ be such that $F^{\ast}h=0$ on $\pa B$.
Again extending $h$ by zero in $S^{1}\ba S_0^{1}$ to obtain $h\in L^2(S^{1})$. We find that the Cauchy data of the Herglotz wave function $v_h$ vanishes on $\pa B$. Note that
the Herglotz wave function $v_h$ is an entire solution of the Helmhlotz equation in $\R^2$. Thus by Holmgren's uniqueness theorem we deduce that $v_h$ vanishes identically in $\R^2$.
This further implies that $h=0$ on $\pa B$ \cite{CK} and the proof is complete.
\end{proof}

{\bf Method of Green's Formula (MGF)}
\begin{itemize}
\item Given partial data $\mathbb{F}^{(l)}_{limit}$, set $\widetilde{\mathbb{F}}^{(l)}_{limit}$ by Theorem~\ref{blocksymmetric}.
\item Solve $(\phi,\psi)$ for \eqref{Cauchysolveequation} using $\mathbb{F}^{(l)}_{limit} \cup \widetilde{\mathbb{F}}^{(l)}_{limit}$ by Tikhonov regularization.
\item Use \eqref{scatteringamplitude} to obtain $\mathbb{F}_{full}$.
\end{itemize}

\subsection{Method of Single Layer Potential}\label{sec2.3}

We consider the scattered field $u^s$ in the form of a single-layer potential
\ben
u^s(x)=\int_{\pa B}\Phi(x,y)\phi(y)ds(y), \quad x\in\R^2\ba\ov{B}
\enn
with an unknown density $\phi\in L^2(\pa B)$. Its far field pattern is given by
\be\label{farfield3}
u^{\infty}(\hat{x})=\int_{\pa B}e^{ik\hat{x}\cdot y}\phi(y)ds(y), \quad \hat{x}\in\, S^1.
\en
Inspired by this, we introduce the following integral equation of the first kind
\be\label{Sinfty}
S_{\infty}\phi=u^{\infty},
\en
where the far field integral operator $S_{\infty}: L^2(\pa B)\rightarrow L^2(S^1)$ is defined by
\be\label{Sinfty2}
(S_{\infty}\phi)(\hat{x}):=\int_{\pa B}e^{-ik\hat{x}\cdot y}\phi(y)ds(y).
\en
The properties of the far field integral operator $S_{\infty}$ have been collected in the following theorem.
\begin{theorem}
The far field integral operator $S_{\infty}: L^2(\pa B)\rightarrow L^2(S^1)$ is injective and has dense range provided $k^2$ is not a Dirichlet eigenvalue for the negative Laplacian
in $B$.
\end{theorem}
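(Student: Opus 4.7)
The plan is to establish both properties---injectivity and dense range---by transferring the question to standard facts about single-layer potentials and Herglotz wave functions, with the non-Dirichlet-eigenvalue assumption entering at the step where we pass from boundary data on $\pa B$ to information inside $B$.

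For injectivity, I would start from $S_{\infty}\phi = 0$ and form the associated single-layer potential $u^s(x) = \int_{\pa B} \Phi(x,y)\phi(y)\,ds(y)$, which is a radiating solution of the Helmholtz equation in $\R^2\ba\pa B$ whose far-field pattern is exactly $S_{\infty}\phi$. Vanishing of the far field lets Rellich's lemma conclude $u^s\equiv 0$ in $\R^2\ba\ov{B}$. The continuity of the single-layer potential across $\pa B$ then transfers the zero exterior boundary trace to a zero interior Dirichlet trace, so $u^s|_{B}$ solves the interior Dirichlet problem with zero data. Because $k^2$ is not a Dirichlet eigenvalue in $B$, this forces $u^s\equiv 0$ in $B$ as well, and the jump relation for the normal derivative of a single-layer potential yields $\phi = \partial_\nu u^s|_{-} - \partial_\nu u^s|_{+} = 0$ on $\pa B$.

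For dense range, I would pass to the adjoint $S_{\infty}^{*}: L^2(S^1)\to L^2(\pa B)$ and show it is injective. Interchanging the order of integration, exactly as in the proof of Theorem~\ref{Fproperty}, identifies $(S_{\infty}^{*}g)(y)$ with (the complex conjugate of) the Herglotz wave function $v_{\bar g}(y)=\int_{S^1}e^{iky\cdot\hat{x}}\ov{g(\hat{x})}\,ds(\hat{x})$ restricted to $\pa B$. If $S_{\infty}^{*}g=0$, then $v_{\bar g}$ vanishes on $\pa B$; since $v_{\bar g}$ is an entire solution of the Helmholtz equation, its restriction to $B$ would be a Dirichlet eigenfunction for eigenvalue $k^2$, which is excluded by hypothesis. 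Hence $v_{\bar g}\equiv 0$ in $B$, and by the real-analyticity of solutions of Helmholtz (unique continuation) it vanishes identically in $\R^2$. The standard injectivity of the Herglotz operator on $L^2(S^1)$ then forces $g=0$.

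The main point to watch is the role of the non-eigenvalue assumption, which is invoked in both halves of the proof and is precisely what prevents nontrivial kernels at interior resonances; without it the single-layer potential in the injectivity argument and the Herglotz wave function in the density argument could both support nontrivial eigenfunctions on $B$. Beyond that, the argument is essentially bookkeeping: the continuity of the single-layer trace and its normal-derivative jump relation on the $C^2$ boundary $\pa B$, together with Rellich's lemma and Holmgren/unique continuation, are all classical.
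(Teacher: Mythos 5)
Your argument is correct and is essentially the standard proof the paper points to: the paper omits the proof, citing Theorem 5.19 of Colton--Kress, and your two-step argument (Rellich plus the interior Dirichlet problem and the normal-derivative jump for injectivity; the adjoint as a Herglotz wave function trace plus the non-eigenvalue assumption and unique continuation for dense range) is exactly that proof adapted to the 2D setting. No gaps; the sign conventions in the jump relation and the identification of $S_{\infty}^{*}g$ with the Herglotz trace are handled correctly.
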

We omit the proof since, except for minor adjustments, they literally coincide with those of Theorem 5.19 in \cite{CK}, where, the 3D case is considered.
The requirement on $k^2$ is not essential since we have the freedom to choose $B$.

To recover {\em full-aperture} data $\mathbb{F}_{full}$,
one may firstly solve the equation \eqref{Sinfty} by the Tikhonov regularization in $L^2(S^1_0)$ and then insert the solution $\phi$
into $u^{\infty}(\hat{x}):=(S_{\infty}\phi)(\hat{x})$ to obtain the missed data. Again, the integral operator $S_{\infty}$ has an analytic kernel and therefore equation
\eqref{Sinfty} is severely ill-posed.\\

{\bf Method of Single Layer Potential (MSLP)}
\begin{itemize}
\item Given partial data $\mathbb{F}^{(l)}_{limit}$, set $\widetilde{\mathbb{F}}^{(l)}_{limit}$ by Theorem~\ref{blocksymmetric}.
\item Solve $\phi$ for \eqref{Sinfty} using $\mathbb{F}^{(l)}_{limit} \cup \widetilde{\mathbb{F}}^{(l)}_{limit}$ by Tikhonov regularization.
\item Use  \eqref{farfield3} to obtain $\mathbb{F}_{full}$.
\end{itemize}


\subsection{From limited-aperture to full-aperture}
The direct application of the above two methods does not produce good recovery of the full-aperture data due to the severe ill-posedness.
To improve the result, we propose a step by step alternative technique which makes use of the symmetry of $\mathbb{F}_{full}$.
Roughly speaking, we use {\bf MGF} or {\bf MSLP} to recover a few data using the known data. Then Theorem~\ref{blocksymmetric} is used to obtain more data.
The process is repeated until $\mathbb{F}_{full}$ is recovered.

Now we ready to introduce the algorithm to recover the full-aperture MSR.

\begin{itemize}
  \item[] {\bf DR-MSR:}
  \item Step 1. Measure the {\em limited-aperture} far field pattern $\mathbb{F}^{(l)}_{limit}$.
  \item Step 2. Using {\bf MGF} or {\bf MSLP}, recover the far field pattern
                \ben
                M_{new}:=\{\hat{x}_{l+1}, \hat{x}_{l+1},\cdots \hat{x}_{l+t}, \hat{x}_{2m-st-t+1}, \hat{x}_{2m-st-t+2},\cdots,\hat{x}_{2m-st}\},
                \enn
                i.e., compute data in $2t$ new directions close to known data.
  \item Step 3. Recover $\widetilde{\mathbb{F}}^{(l)}_{limit}$ in \eqref{MSR-l} using Theorem~\ref{blocksymmetric}.
  \item Step 4. If $\mathbb{F}_{full}$ is obtained, stop. Otherwise, set $l=l+t, s=s+1$ and go to Step 2.
\end{itemize}

\begin{remark}
The scheme makes no use of the boundary conditions or topological properties of the underlying object $\Om$. In other words, the full-aperture data is
retrieved without any a priori information on $\Om$.
\end{remark}

\section{Numerical Examples and Discussions}
\label{NEs}
\setcounter{equation}{0}
The numerical examples are divided into two groups.
We first present some numerical examples to demonstrate how to use the proposed methods to recover data.
The second group of numerical examples are to test the recovered data, which are used by some non-iterative methods to reconstruct the support of the scatterer.
The results show that the reconstruction improves significantly, indicating that the recovered data does help in certain cases.
%
Two scatterers are considered (see Fig.~\ref{truedomains}):
\be
\label{kite}&\mbox{\rm Kite:}&\quad x(t)\ =(\cos t+0.65\cos 2t-0.65, 1.5\sin t),\quad 0\leq t\leq2\pi,\\
\label{peanut}&\mbox{\rm Peanut:}&\quad x(t)\ =\sqrt{3\cos^2 t+1}(\cos t, \sin t),\quad 0\leq t\leq2\pi.
\en
\begin{figure}[htbp]
  \centering
  \subfigure[\textbf{Kite}]{
    \includegraphics[width=2.6in]{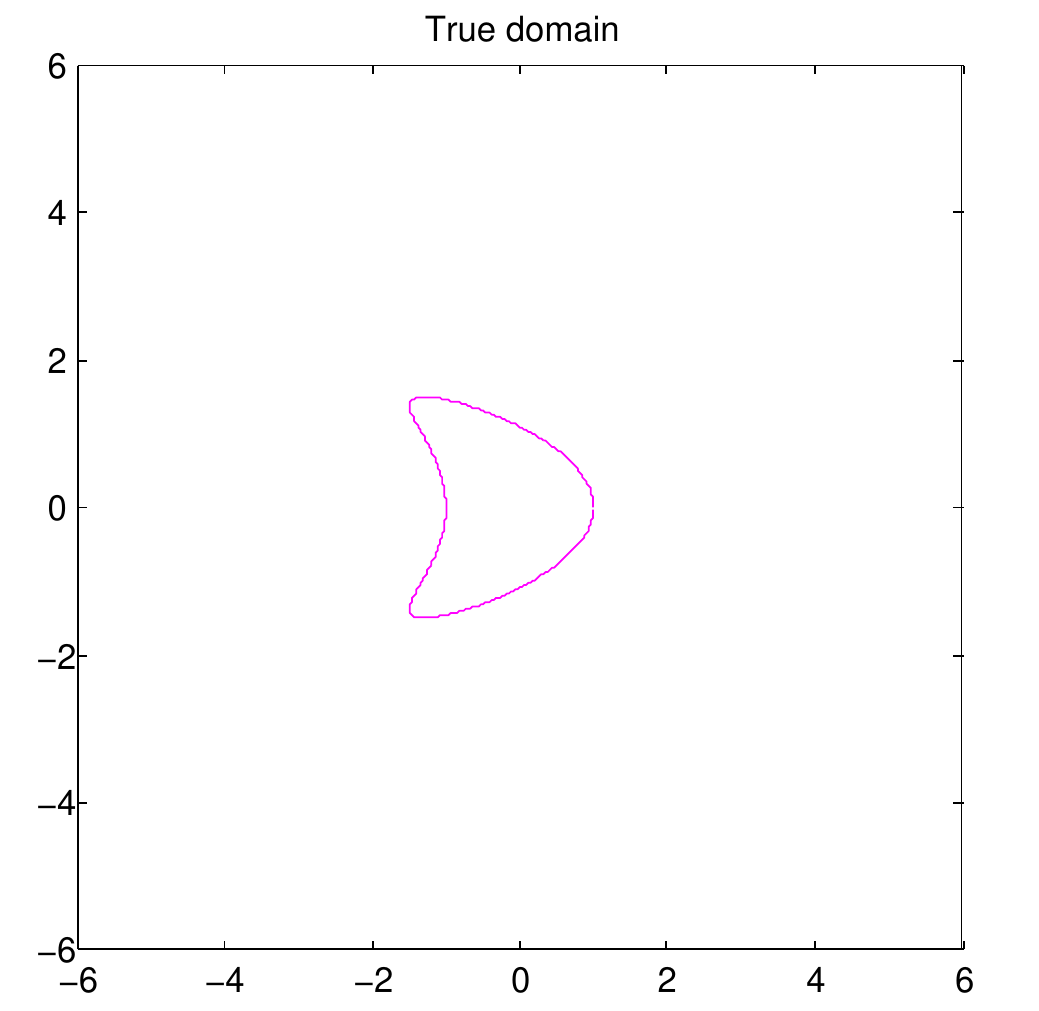}}
  \subfigure[\textbf{Peanut}]{
    \includegraphics[width=2.6in]{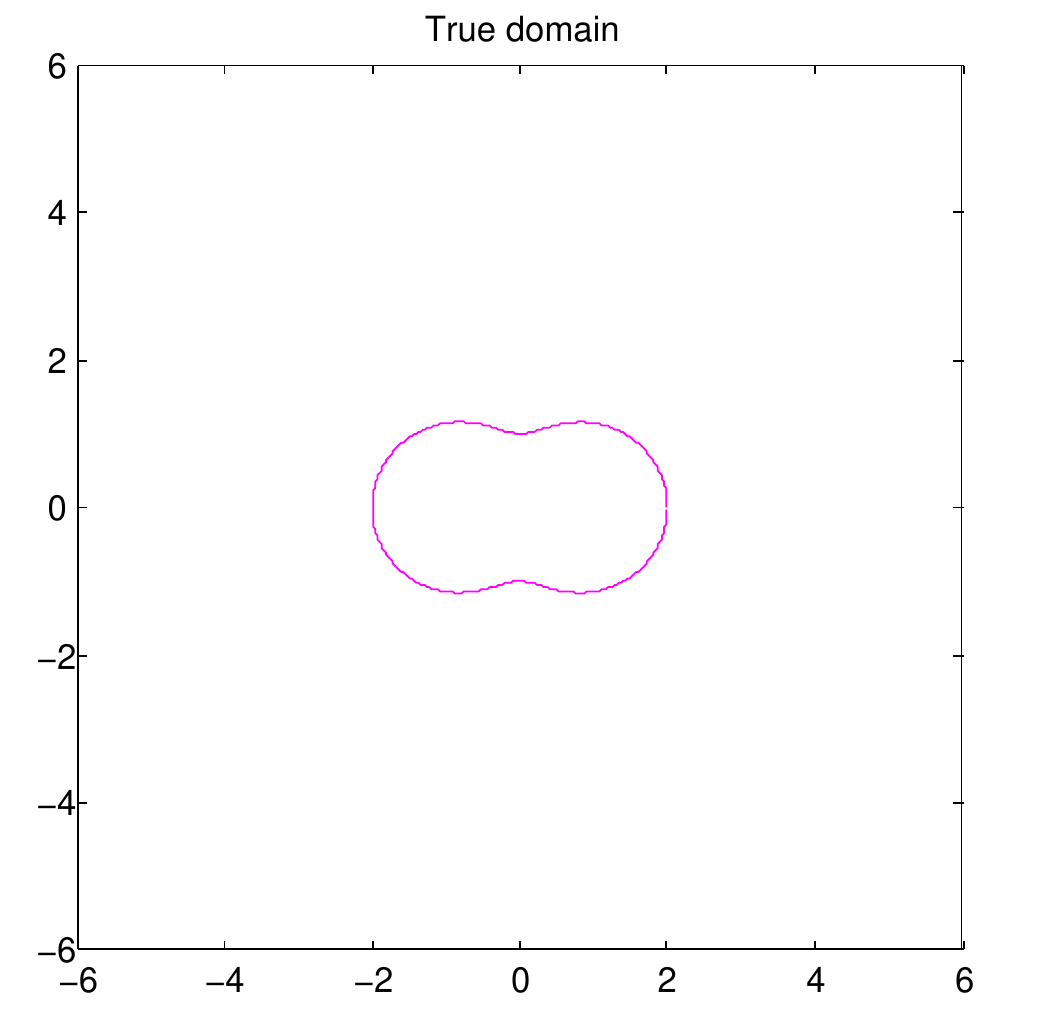}}
\caption{Domains considered.}
\label{truedomains}
\end{figure}

The synthetic scattering data are generated by the boundary integral equation method, i.e.,
$u^\infty_{p,q},\,p,q=1,2,\cdots,2m$, for $2m$ equidistantly distributed directions in $(0, 2\pi]$.
We first verify Theorem \ref{blocksymmetric}.
Let $k=6$, $m=4$ and take the kite as an example.
The MSR matrix $\mathbb{F}_{full}$ is $8\times 8$. The four block matrices are given as follows.
\ben
\mathbb{F}_{11}
&=&\left(
   \begin{array}{cccc}
    -2.6282 + 1.8817i &  0.1698 + 0.4158i &  0.1657 - 0.2286i  & 1.0722 - 0.6313i \\
   0.0028 - 0.9694i & -2.5830 + 1.9160i &  0.3264 + 0.1581i & -0.4424 - 0.9227i \\
  -0.0740 + 0.7809i &  0.2839 - 0.5024i & -2.4052 + 1.5689i &  0.3264 + 0.1581i  \\
   0.1929 - 0.5886i &  0.2202 + 0.4858i  & 0.2839 - 0.5024i & -2.5830 + 1.9160i  \\
   \end{array}
 \right),\\
\mathbb{F}_{22}
&=&\left(
  \begin{array}{cccccccc}
   -2.6282 + 1.8817i &  0.0028 - 0.9694i & -0.0740 + 0.7809i  & 0.1929 - 0.5886i\\
   0.1698 + 0.4158i  &-2.5830 + 1.9160i  & 0.2839 - 0.5024i   &0.2202 + 0.4858i\\
   0.1657 - 0.2286i  & 0.3264 + 0.1581i  &-2.4052 + 1.5689i   &0.2839 - 0.5024i\\
   1.0722 - 0.6313i  &-0.4424 - 0.9227i  & 0.3264 + 0.1581i  &-2.5830 + 1.9160i  \\
  \end{array}
\right),\\
\mathbb{F}_{12}
&=&\left(
  \begin{array}{cccccccc}
   -0.5250 + 0.1132i &  1.0722 - 0.6313i &  0.1657 - 0.2286i &  0.1698 + 0.4158i \\
   1.0722 - 0.6313i & -0.0050 - 0.3054i  & 0.1510 + 0.3285i & -0.5603 - 0.0594i\\
   0.1657 - 0.2286i &  0.1510 + 0.3285i & -0.3128 - 0.5104i & -0.6526 - 1.3338i\\
   0.1698 + 0.4158i & -0.5603 - 0.0594i & -0.6526 - 1.3338i & -0.0441 - 0.9080i \\
  \end{array}
\right),\\
\mathbb{F}_{21}
&=&\left(
  \begin{array}{cccccccc}
   -0.4473 - 0.3633i  & 0.1929 - 0.5886i & -0.0740 + 0.7809i  & 0.0028 - 0.9694i\\
   0.1929 - 0.5886i  &-0.0441 - 0.9080i  &-0.6526 - 1.3338i & -0.5603 - 0.0594i\\
  -0.0740 + 0.7809i  &-0.6526 - 1.3338i  &-0.3128 - 0.5104i &  0.1510 + 0.3285i\\
   0.0028 - 0.9694i  &-0.5603 - 0.0594i  & 0.1510 + 0.3285i & -0.0050 - 0.3054i \\
  \end{array}
\right).
\enn
It is obvious that Theorem \ref{blocksymmetric} holds:
\ben
\mathbb{F}_{11}= \mathbb{F}_{22}^T,\quad \mathbb{F}_{12}=\mathbb{F}_{12}^T\quad\mbox{ and }\quad \mathbb{F}_{21}= \mathbb{F}_{21}^T.
\enn

In the rest of the section, we fix $k=6$ and divide $(0, 2\pi)$ uniformly into $300\,(2m)$ directions.
Assuming that the incident directions cover the full aperture, the observation directions only span
a subset of $(0, 2\pi)$. In particular, let $\hat{x}:=(\cos\phi, \sin\phi)$ with the observation angle $\phi$. Then we consider the measurements for three cases:
\ben
(1)\,\phi\in (0,\pi/2),\quad (2)\,\phi\in (0,2\pi/3),\quad\mbox{and}\quad (3)\,\phi\in (0,\pi).
\enn
Namely, we have the synthetic data $\mathbb{F}^{(l)}_{limit}$ for $l=1,\ldots, 75$,  $l=1,\ldots, 100$, and $l=1,\ldots, 150$, respectively.
Then
$\mathbb{F}^{(l)}_{limit}$ is perturbed by random noises
\ben
\mathbb{F}_{limit}^{(l), \delta}\, =\, \mathbb{F}_{limit}^{(l)} +\delta\|\mathbb{F}_{limit}^{(l)}\|\frac{R_1+R_2 i}{\|R_1+R_2 i\|},
\enn
where $R_1$ and $R_2$ are two matrixes containing pseudo-random values
drawn from a normal distribution with mean zero and standard deviation one. The
value of $\delta$ is the noise level, which is taken as $\delta = 0.05$.


\subsection{Data Recovery}
Examples in this subsection are to test the validity of data recover algorithms proposed in Section~\ref{DRT}.
Given $\mathbb{F}_{limit}^{(l), \delta}$, the goal is to recover $\mathbb{F}_{full}^{\delta}$, the {\em full-aperture} MSR.
We take the kite as the scatterer. The artificial domain $B$ is chosen to be a disc centered at the origin with radius $5$.

Figures \ref{dataretrievalkite4}-\ref{dataretrievalkite2} show the data reconstructions with different measurement apertures
$(0,\pi/2), (0,2\pi/3),$ and $(0,\pi),$ respectively.
We show the recovered data for the incident direction $d=(1,0)$, i.e., the first row of $\mathbb{F}_{full}^{\delta}$ in Figures \ref{dataretrievalkite4}$(a)$-\ref{dataretrievalkite2}$(a)$.
Only few data are well reconstructed. This is reasonable because both methods, {\bf MGF} and {\bf MSLP}, involve solving severely ill-posed integral equations.
In particular, the symmetry does not apply in this case.
Similar results with respective to $d=(0,1)$ are shown in Figures \ref{dataretrievalkite4}$(b)$-\ref{dataretrievalkite2}$(b)$.


For incident angles in $[\pi, 3\pi/2]$, we have obtained nearly exact far field pattern for all observation directions. In Figures
\ref{dataretrievalkite4}$(c)(d)$-\ref{dataretrievalkite2}$(c)(d)$ we show results for two incident directions  $d=(-1,0)$ and $d=(0,-1)$.
This further verify the symmetric structure of the multi-static response matrix.\\

\begin{figure}[htbp!]
  \centering
  \subfigure[\textbf{$d=(1,0)$}]{
    \includegraphics[width=2.5in]{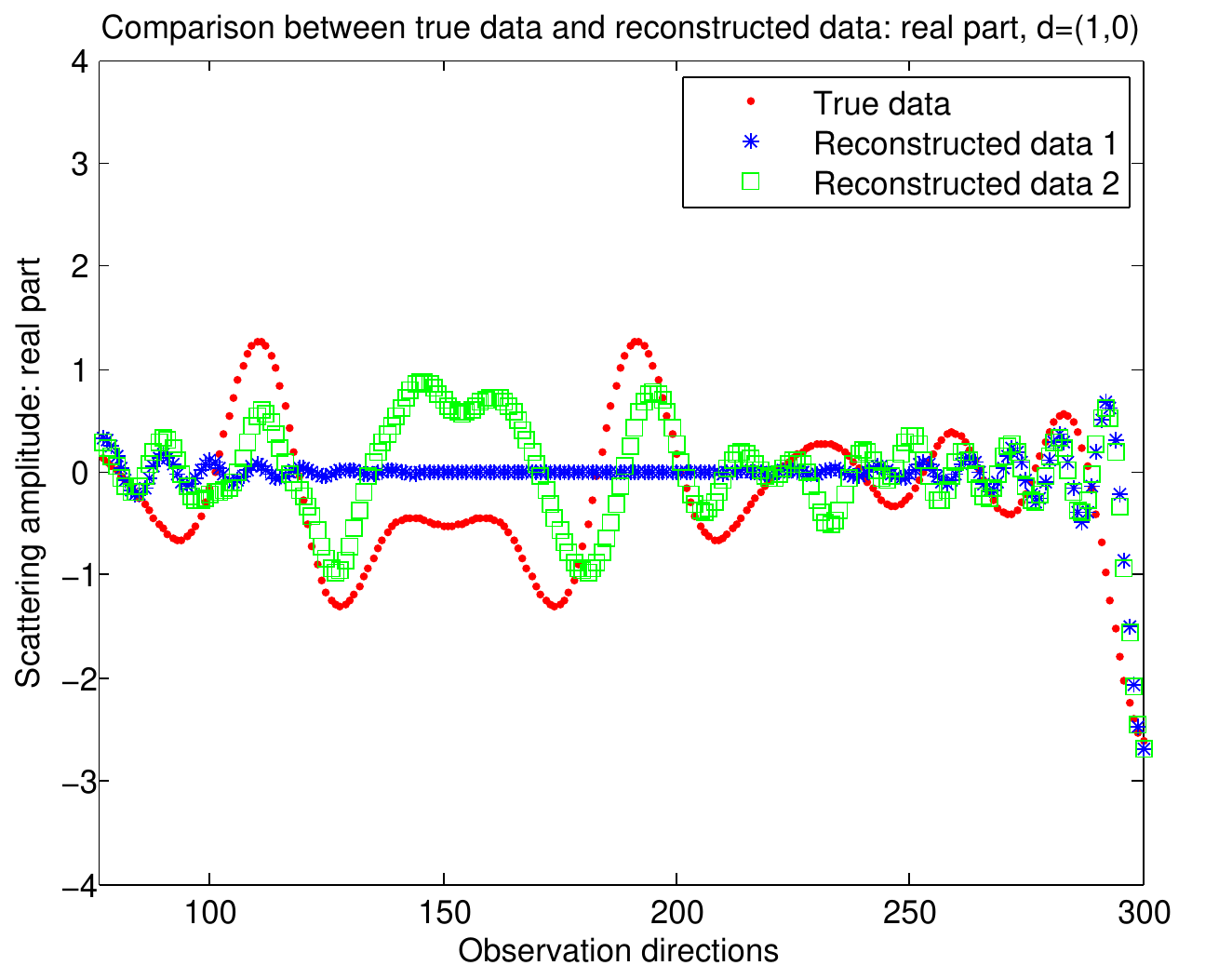}}
  \subfigure[\textbf{$d=(0,1)$}]{
    \includegraphics[width=2.5in]{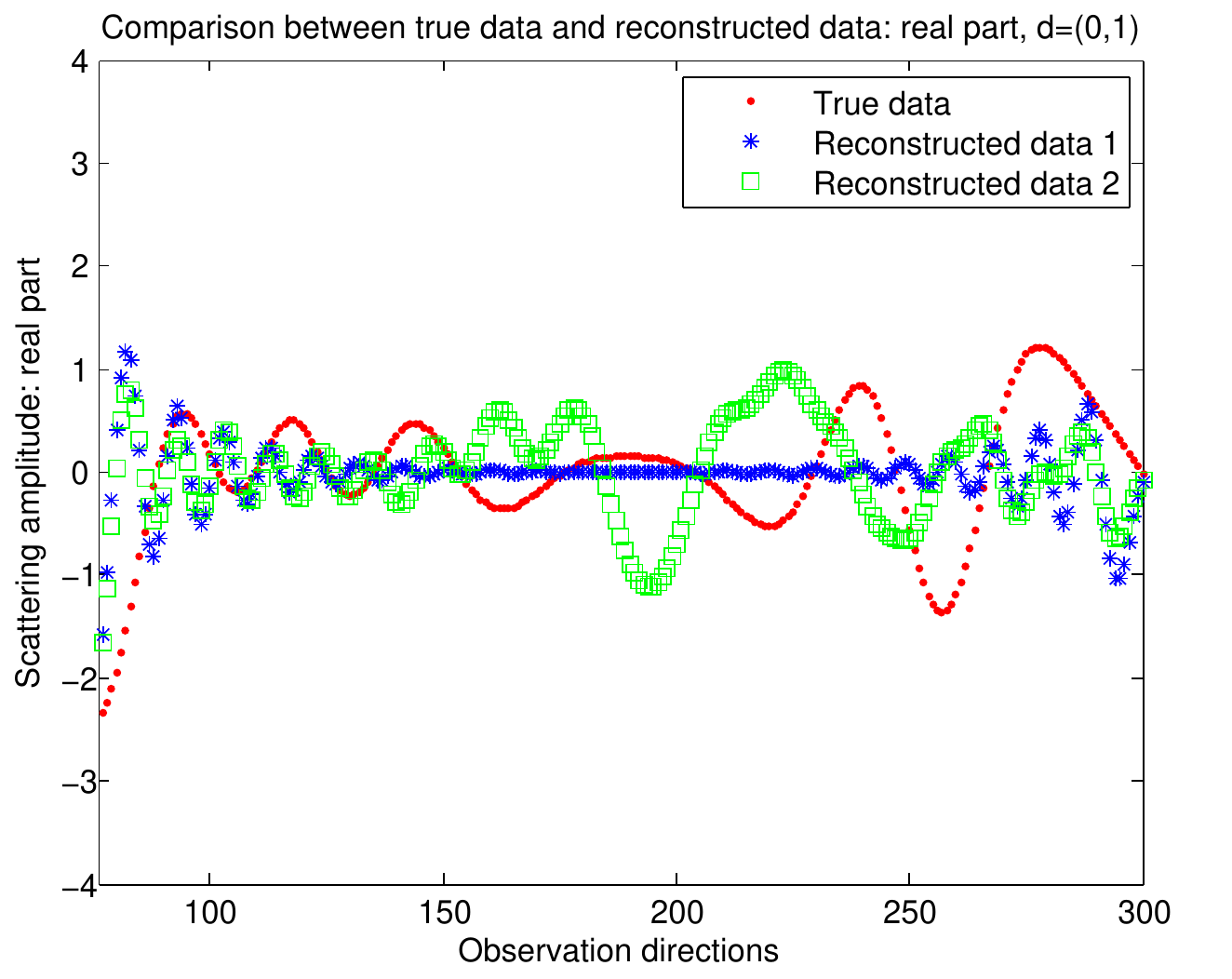}}
  \subfigure[\textbf{$d=(-1,0)$}]{
    \includegraphics[width=2.5in]{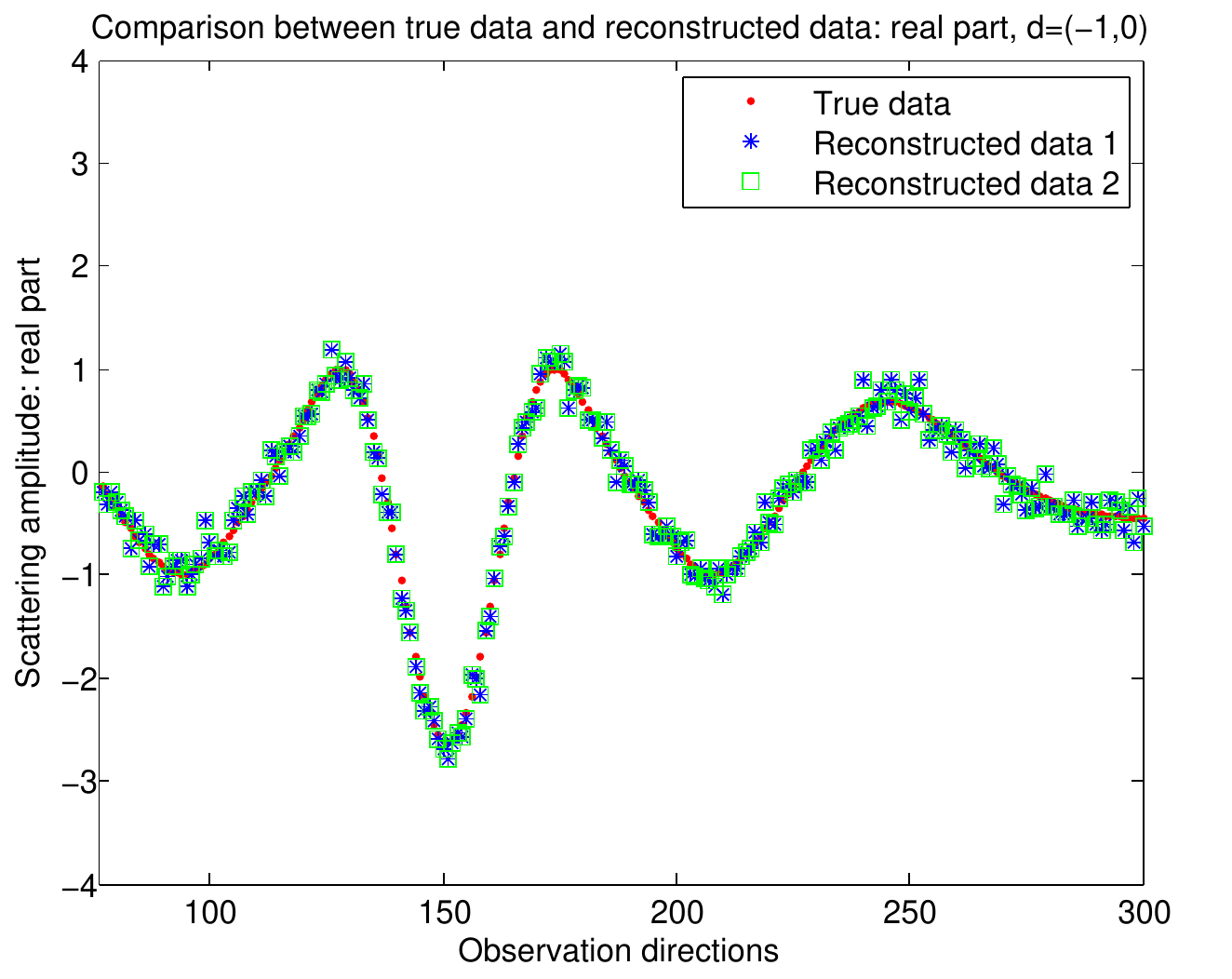}}
  \subfigure[\textbf{$d=(0,-1)$}]{
    \includegraphics[width=2.5in]{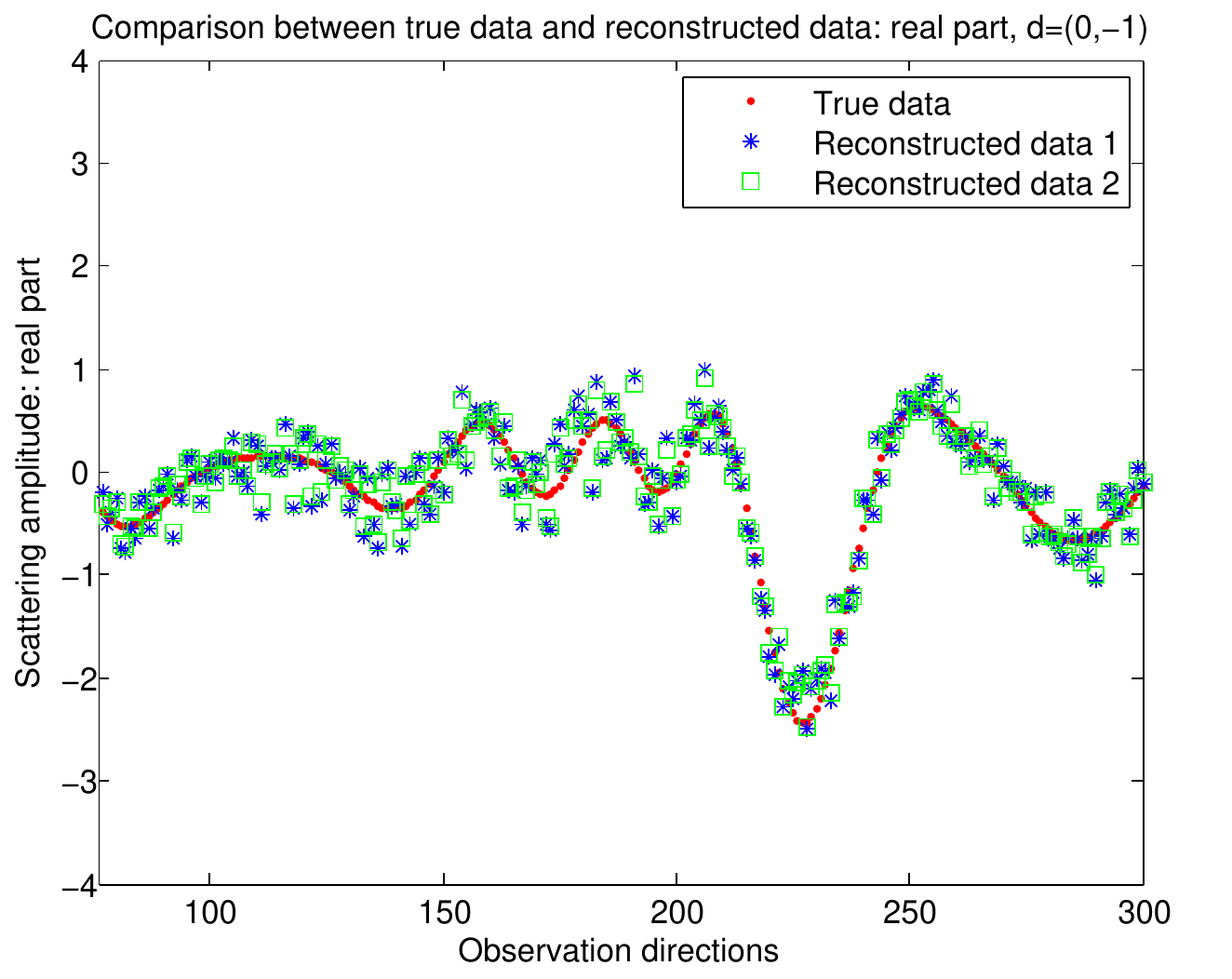}}
\caption{Exact data and recovered data with four different incident directions.
Reconstructed data $1$ is obtained by {\bf MGF}.
Reconstructed data $2$ is obtained by {\bf MSLP}.
The measurements are taken with observation angles $\phi\in (0,\pi/2)$. }
\label{dataretrievalkite4}
\end{figure}

\begin{figure}[htbp!]
  \centering
  \subfigure[\textbf{$d=(1,0)$}]{
    \includegraphics[width=2.5in]{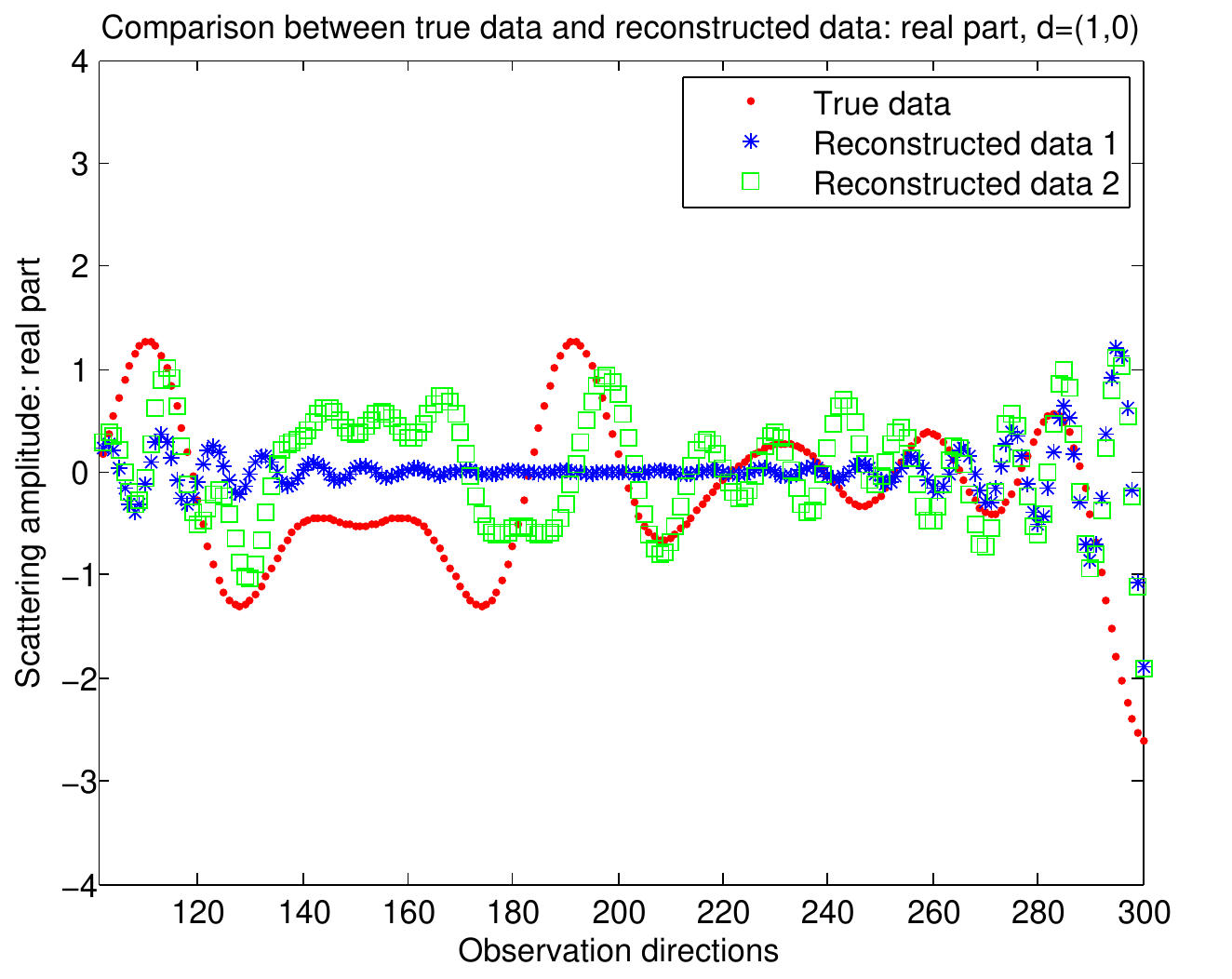}}
  \subfigure[\textbf{$d=(0,1)$}]{
    \includegraphics[width=2.5in]{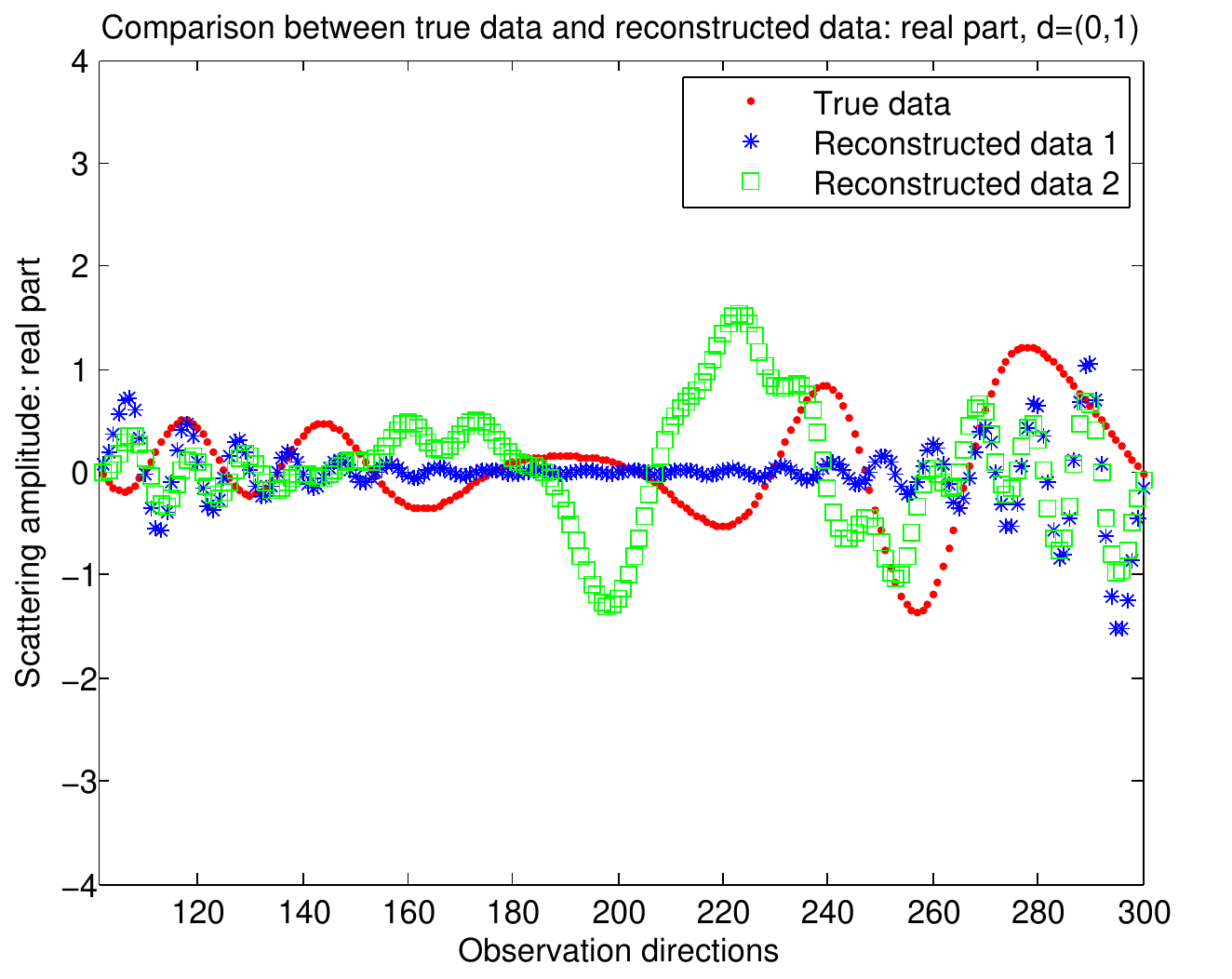}}
  \subfigure[\textbf{$d=(-1,0)$}]{
    \includegraphics[width=2.5in]{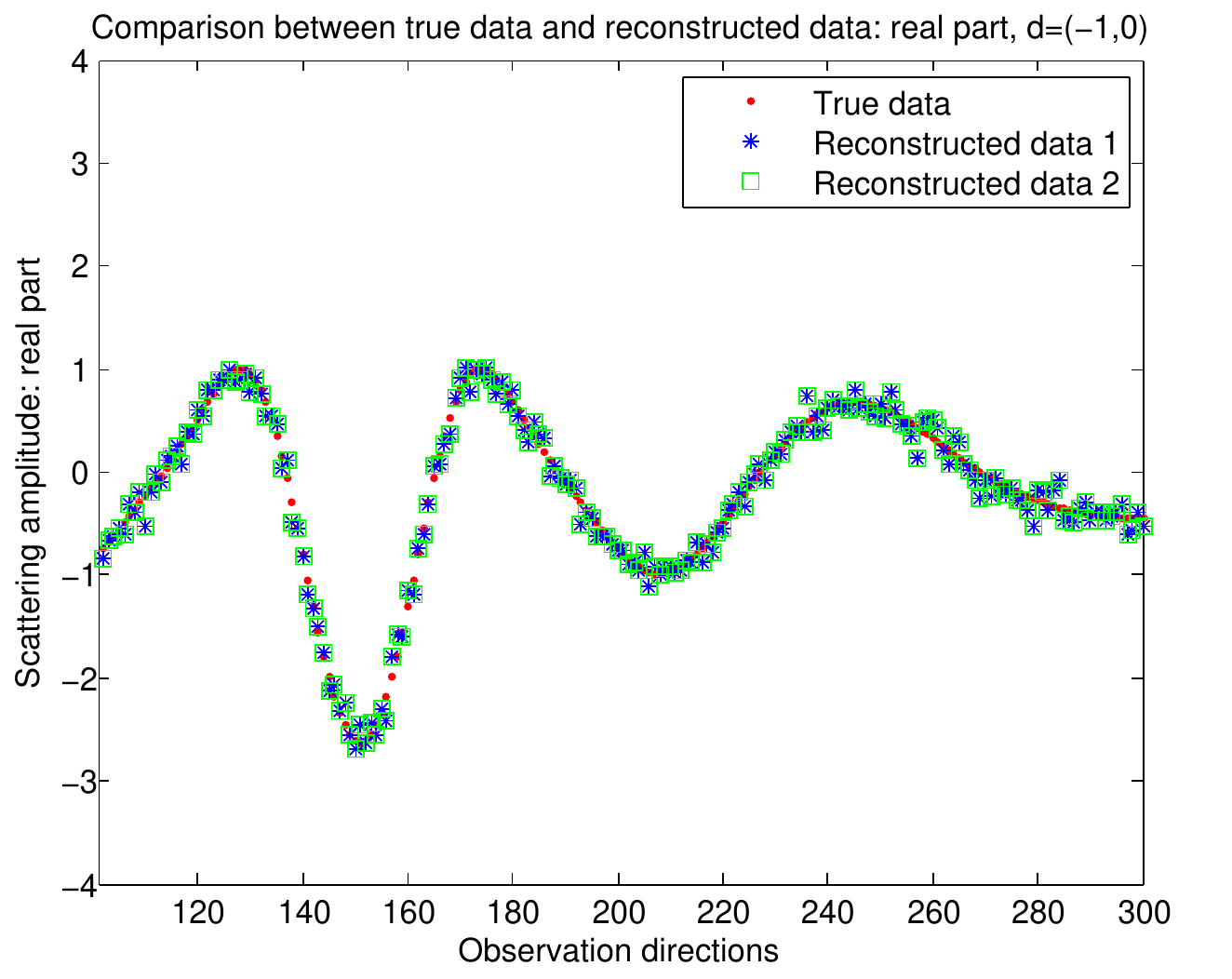}}
  \subfigure[\textbf{$d=(0,-1)$}]{
    \includegraphics[width=2.5in]{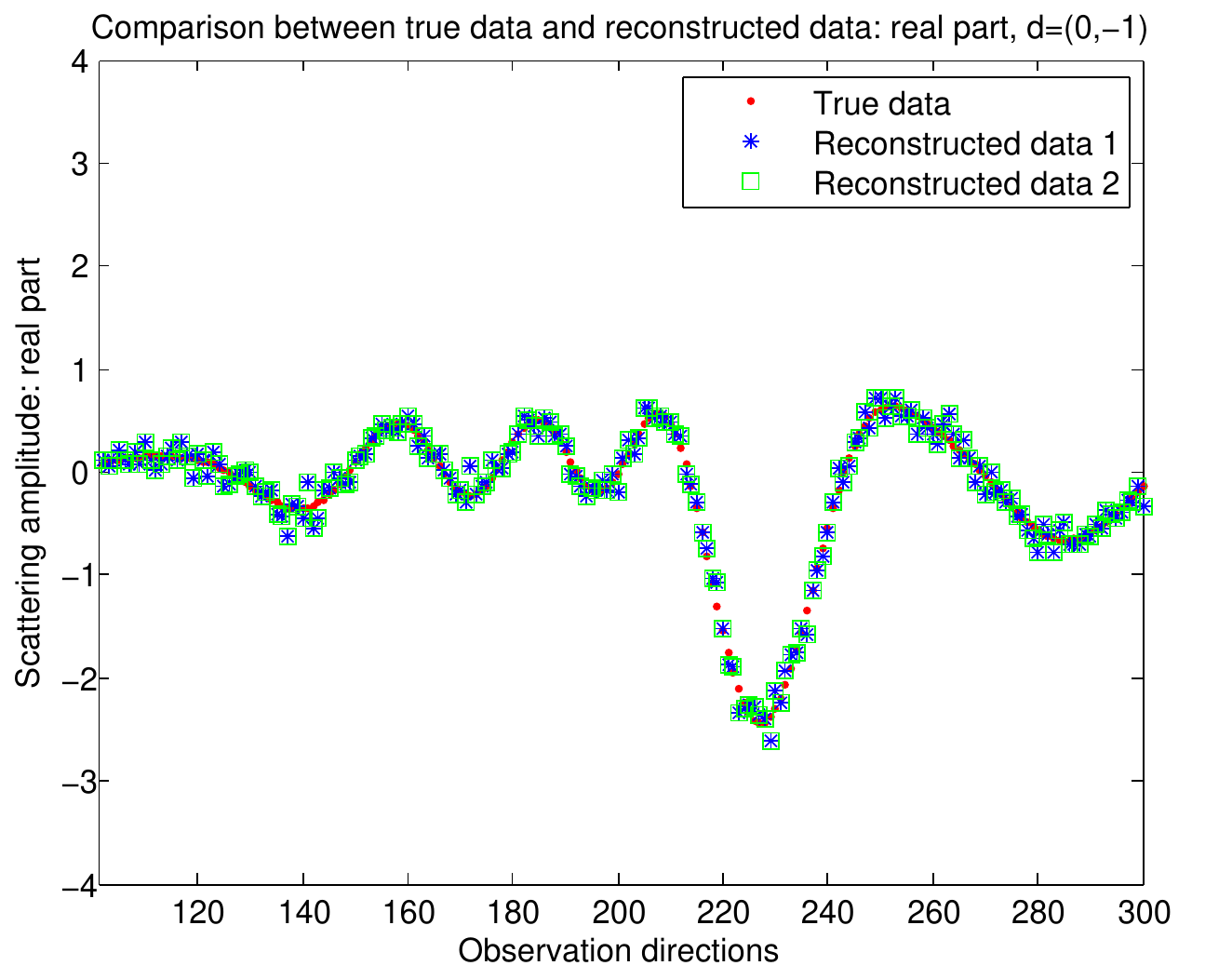}}
\caption{Exact data and recovered data with four different incident directions.
Reconstructed data $1$ is obtained by {\bf MGF}.
Reconstructed data $2$ is obtained by {\bf MSLP}.
The measurements are taken with observation angles  $\phi\in (0,2\pi/3)$. }
\label{dataretrievalkite3}
\end{figure}

\begin{figure}[htbp!]
  \centering
  \subfigure[\textbf{$d=(1,0)$}]{
    \includegraphics[width=2.5in]{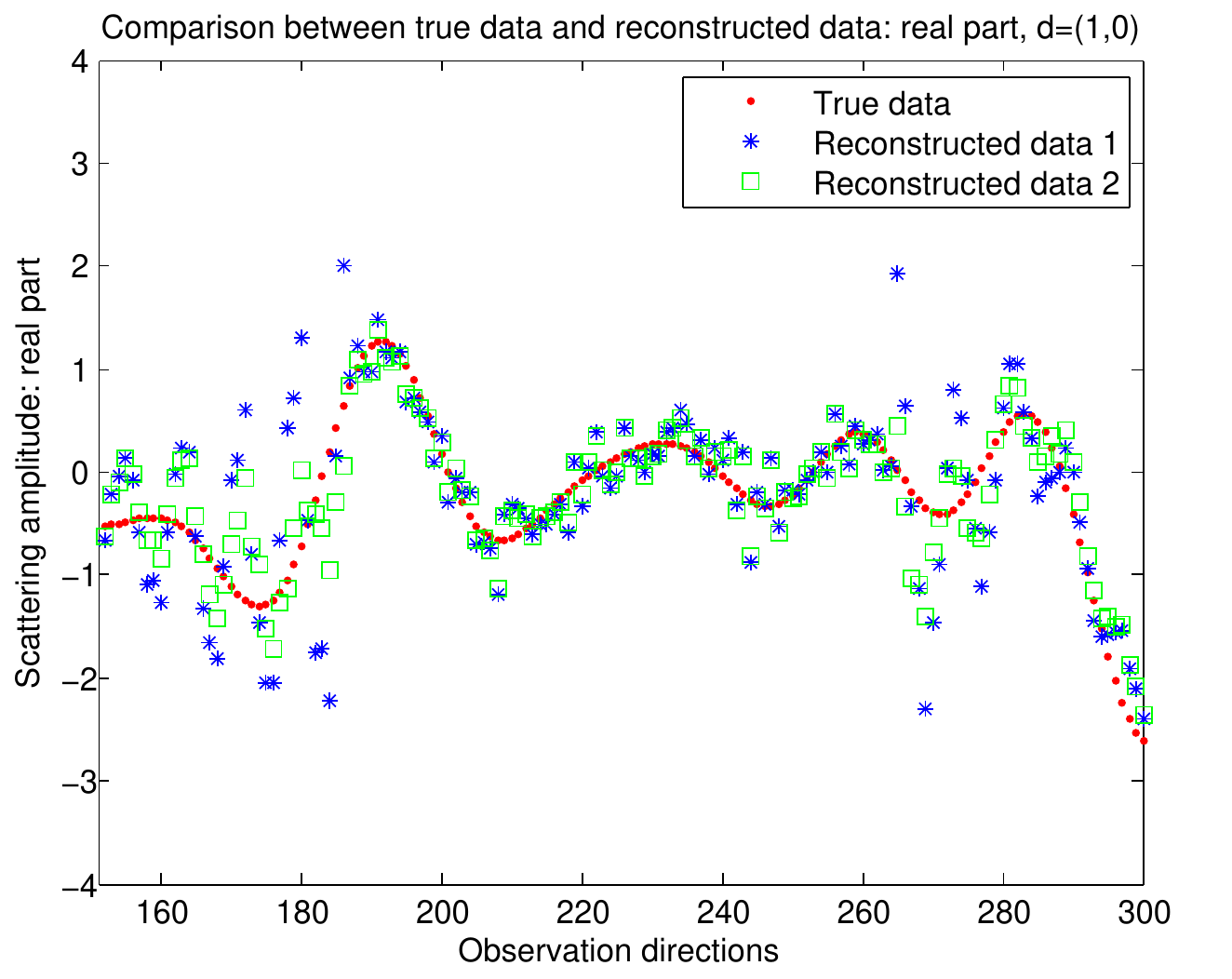}}
  \subfigure[\textbf{$d=(0,1)$}]{
    \includegraphics[width=2.5in]{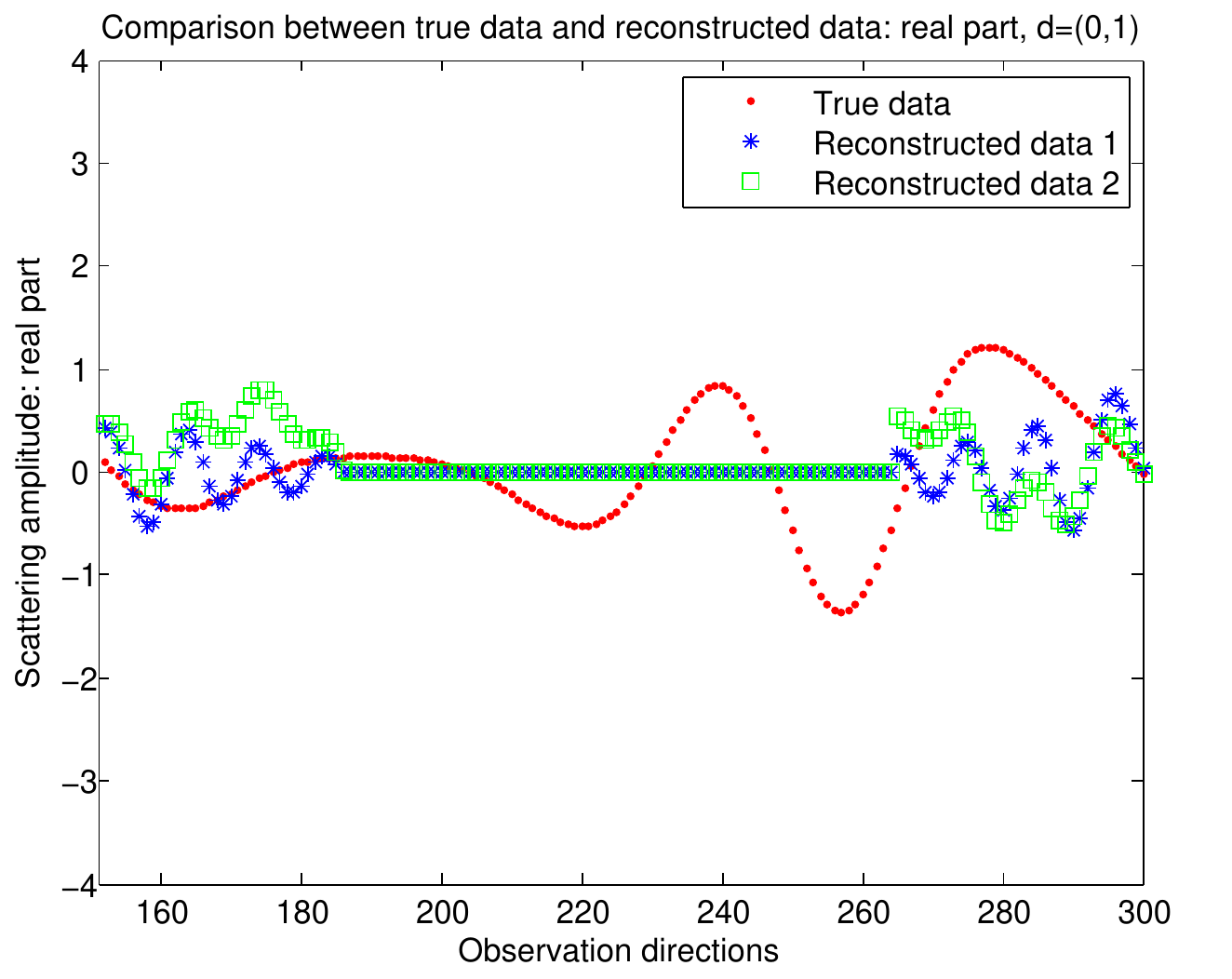}}
  \subfigure[\textbf{$d=(-1,0)$}]{
    \includegraphics[width=2.5in]{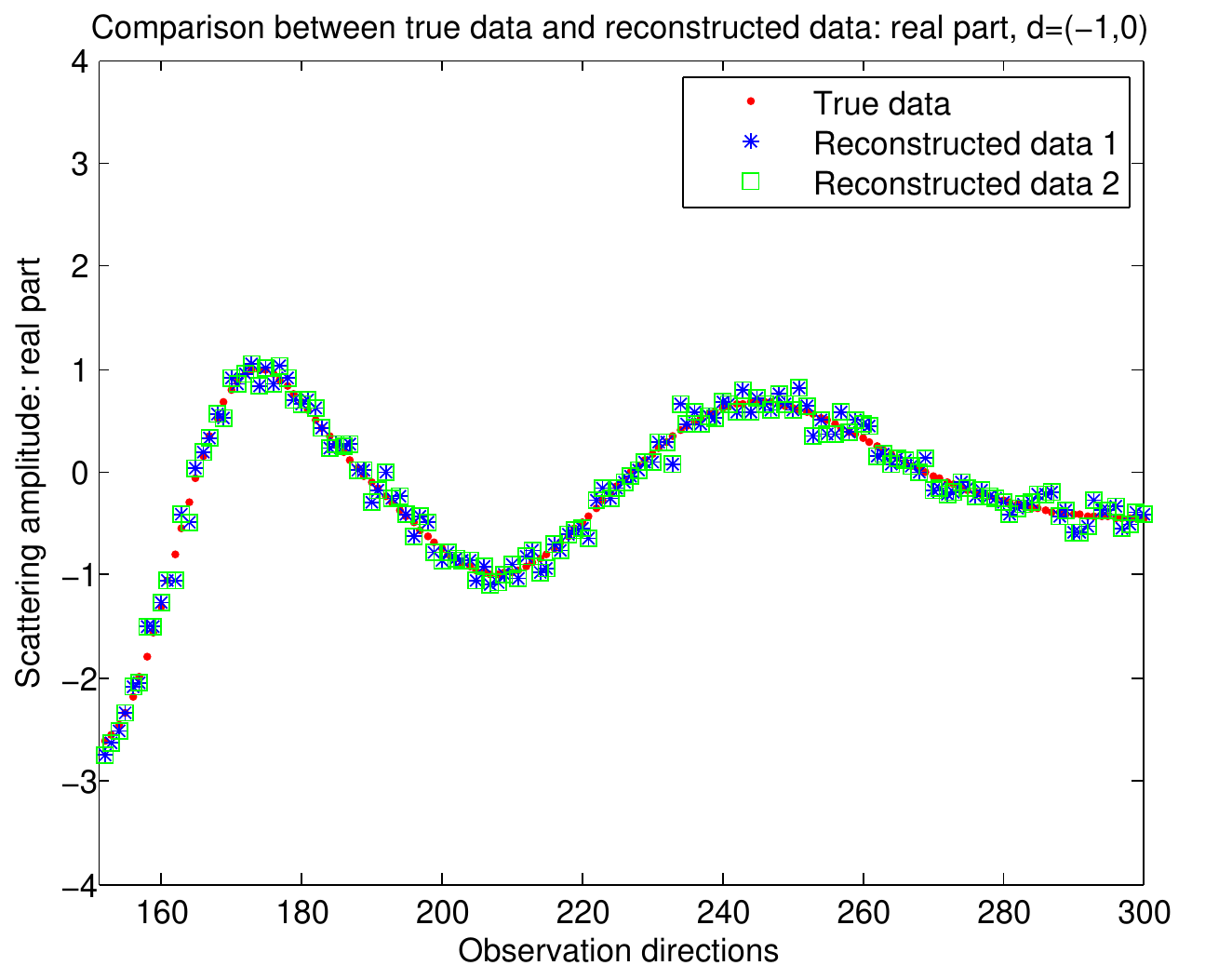}}
  \subfigure[\textbf{$d=(0,-1)$}]{
    \includegraphics[width=2.5in]{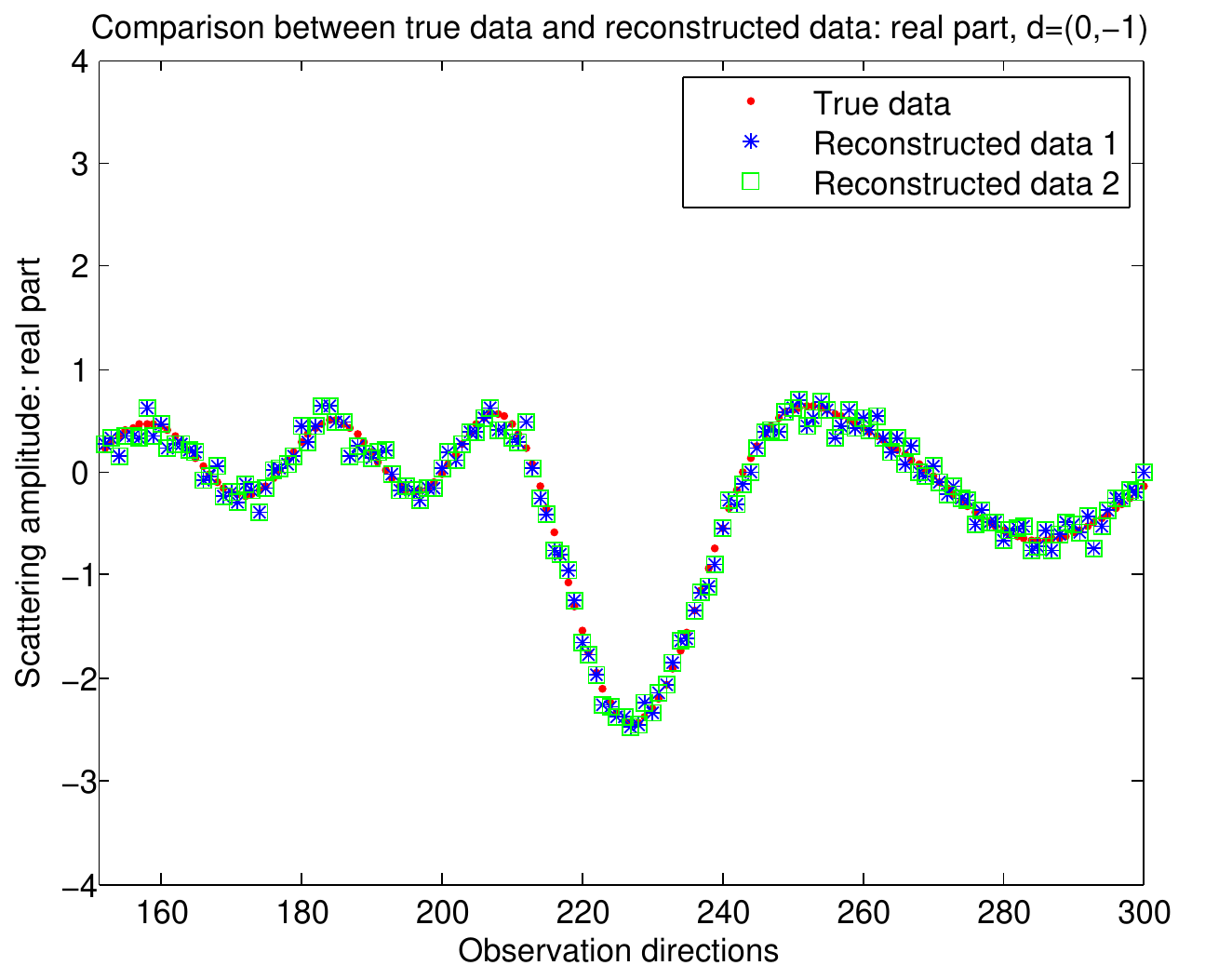}}
\caption{Exact data and recovered data with four different incident directions.
Reconstructed data $1$ is obtained by {\bf MGF}.
Reconstructed data $2$ is obtained by {\bf MSLP}.
The measurements are taken with observation angles $\phi\in (0,\pi)$. }
\label{dataretrievalkite2}
\end{figure}

\subsection{Applications in Sampling Methods}
We first test the recovered data by a novel direct sampling method ({\bf DSM}) proposed in \cite{LiuIP17}, which uses an indicator functional defined as
\be\label{I}
I(z):=|\phi(z;-d)\mathbb{F}_{full}\phi^{T}(z;\hat{x})|^2,
\en
where $\phi(z;-d):=(e^{-ikz\cdot d_1},e^{-ikz\cdot d_2},\cdots,e^{-ikz\cdot d_{2m}})$ and
$\phi(z;\hat{x}):=(e^{ikz\cdot \hat{x}_1},e^{ikz\cdot \hat{x}_2},\cdots,e^{ikz\cdot \hat{x}_{2m}})$.
The indicator takes its maximum on or near the boundary of the scatterer. Consequently, the plot of the indicator can be used to reconstruct the scatterer.

This method can be modified to use only {\em limited-aperture} data by introducing
\be\label{Ilimit}
I_{limit}(z):=|\phi(z;-d)\mathbb{F}^{(l)}_{limit}\phi_{limit}^{T}(z;\hat{x})|^2,
\en
where $\phi_{limit}(z;\hat{x}):=(e^{ikz\cdot \hat{x}_1},e^{ikz\cdot \hat{x}_2},\cdots,e^{ikz\cdot \hat{x}_{l}})$
corresponds to the {\em limited-aperture} observation directions and $\mathbb{F}^{(l)}_{limit}$ is the {\em limited-aperture} data given by \eqref{MSR-l}.


Denote by $\mathbb{F}^{(2)}_{full}$ and $\mathbb{F}^{(3)}_{full}$ the recovered {\em full-aperture} data using {\bf MGF} and {\bf MSLP}, respectively.
We introduce the following indicator
\be\label{Ifull}
I_{full}^{(ii)}(z):=|\phi(z;-d)\mathbb{F}^{(ii)}_{full}\phi^{T}(z;\hat{x})|^2, \quad ii=2,3.
\en
Alternatively, one can reconstruct the scatterer by $I_{full}^{(ii)}(z)$, $ii=2,3$ using recovered {\em full-aperture} data.
As will seen shortly, the quality of the reconstructions indeed improves.

We used a grid $\mathcal{G}$ of $121\times 121$ equally spaced sampling points on some rectangle $[-6,6]\times[-6,6]$.
For each point $z \in \mathcal{G}$, we compute the indicator functionals given in \eqref{Ilimit}-\eqref{Ifull}.

The typical feature for {\em limited-aperture} problems is that the concave part cannot be reconstructed if the the observation angles
do not cover the concave part of the obstacle.
A common criterion for judging the quality of a reconstruction method is whether the concave part of the obstacle can be successfully recovered.
The resulting reconstructions by using the indicator functional $I_{limit}(z)$ with {\em limited-aperture} far field patterns are shown
in Figures \ref{samplingmethodskite}$(a)$, $(d)$, and $(g)$ for different observation apertures.
Clearly, the quality improves with the increase of observation apertures.
We also observe that the illuminated part is well constructed, but the shadow region is elongated down range.

As shown in the second and third columns of Figure \ref{samplingmethodskite}, the reconstructions are indeed improved by using the data recover techniques.
In particularly, the two wings of the kite appear and the shadow region is reconstructed very well.
Considering the severe ill-posedness of the data reconstruction of an analytic function and the relative noise level $\delta=5\%$,
the target reconstructions given in Figure \ref{samplingmethodskite} are satisfactory.
Similar results are shown in Figures \ref{samplingmethodsPeanut} for the peanut.
\begin{figure}[htbp!]
  \centering
  \subfigure[\textbf{$I_{limit}(z)$}]{
    \includegraphics[width=2in]{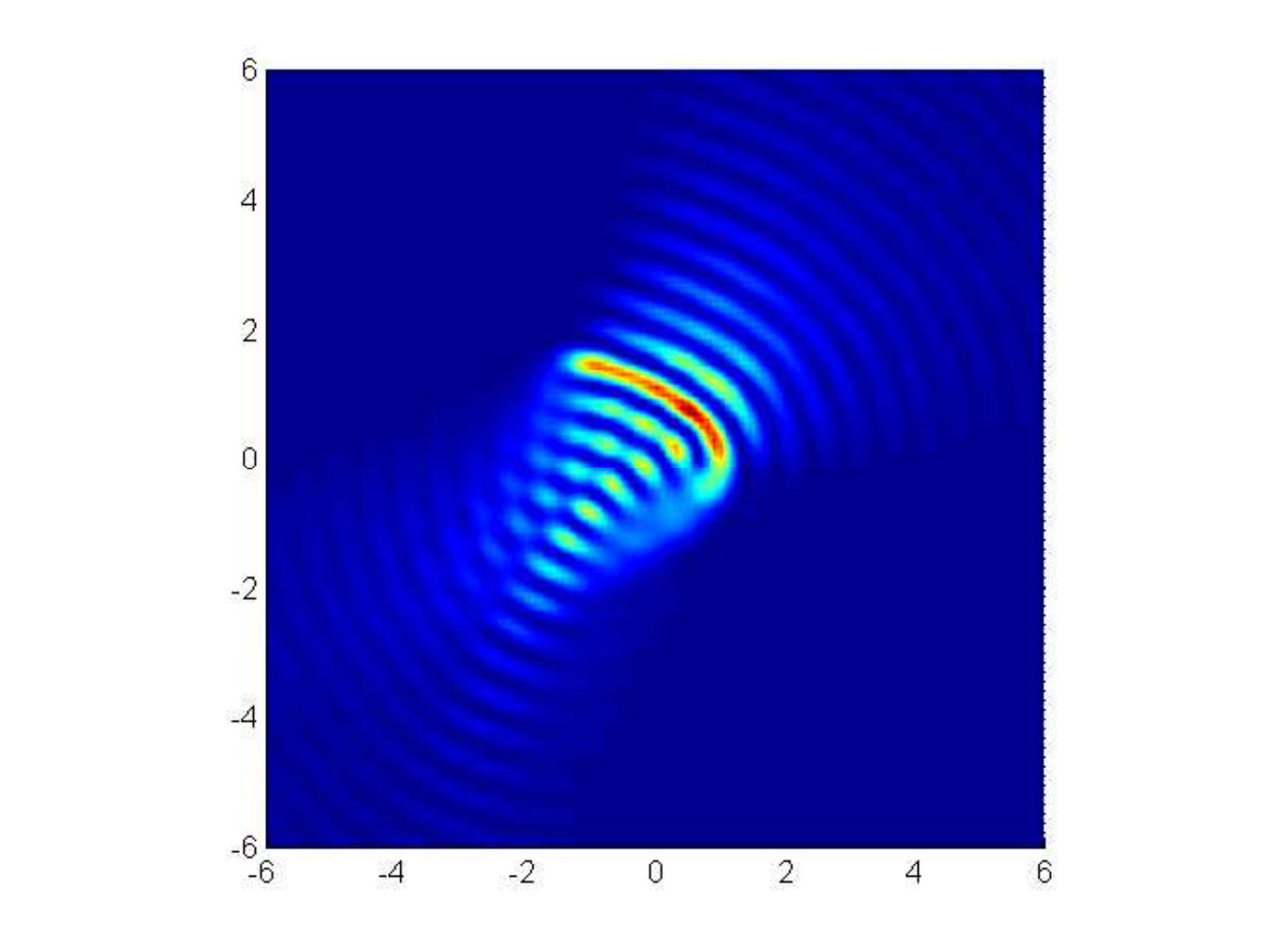}}
  \subfigure[\textbf{$I^{(2)}_{full}(z)$}]{
    \includegraphics[width=2in]{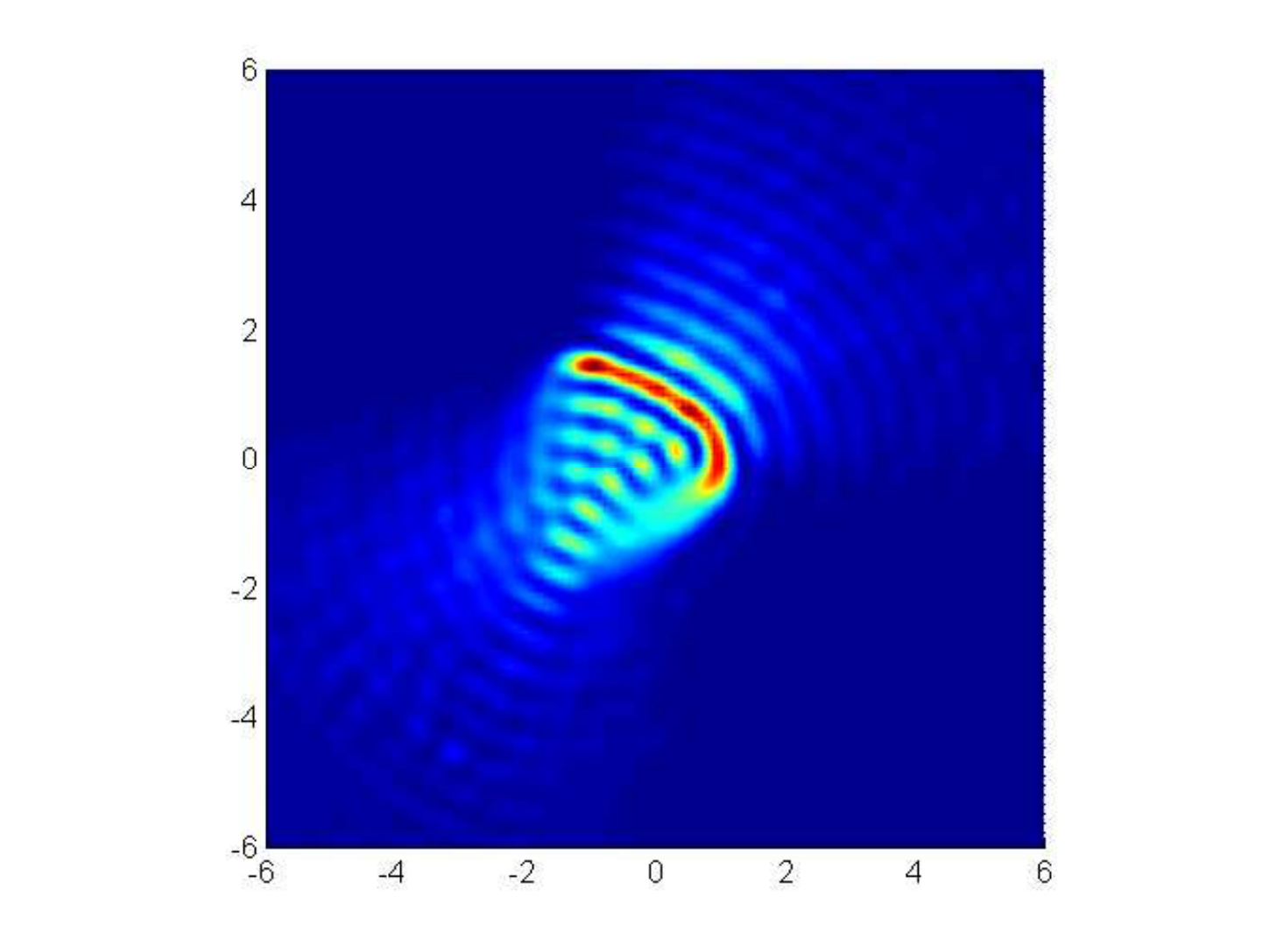}}
  \subfigure[\textbf{$I^{(3)}_{full}(z)$}]{
    \includegraphics[width=2in]{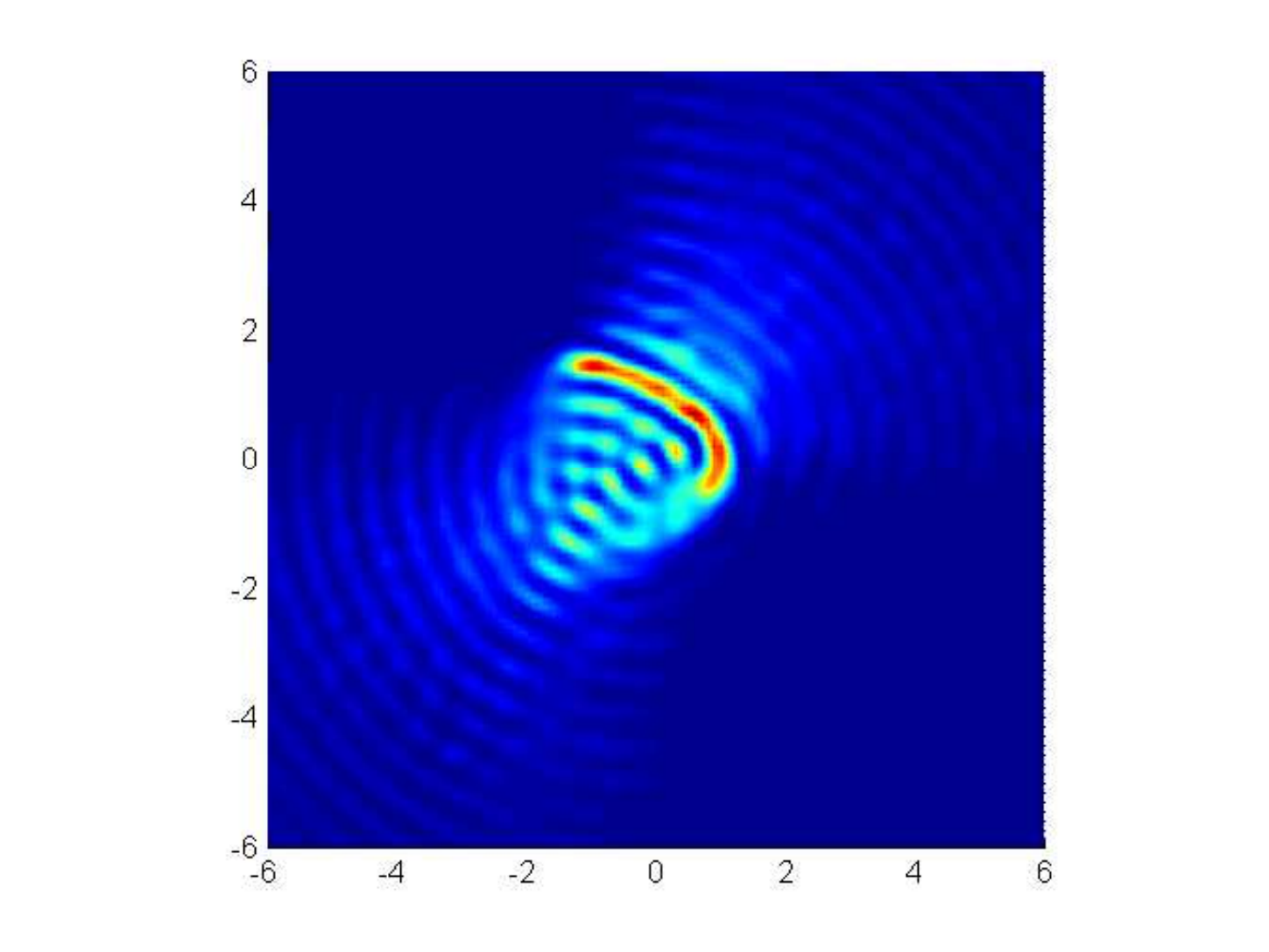}}
  \subfigure[\textbf{$I_{limit}(z)$}]{
    \includegraphics[width=2in]{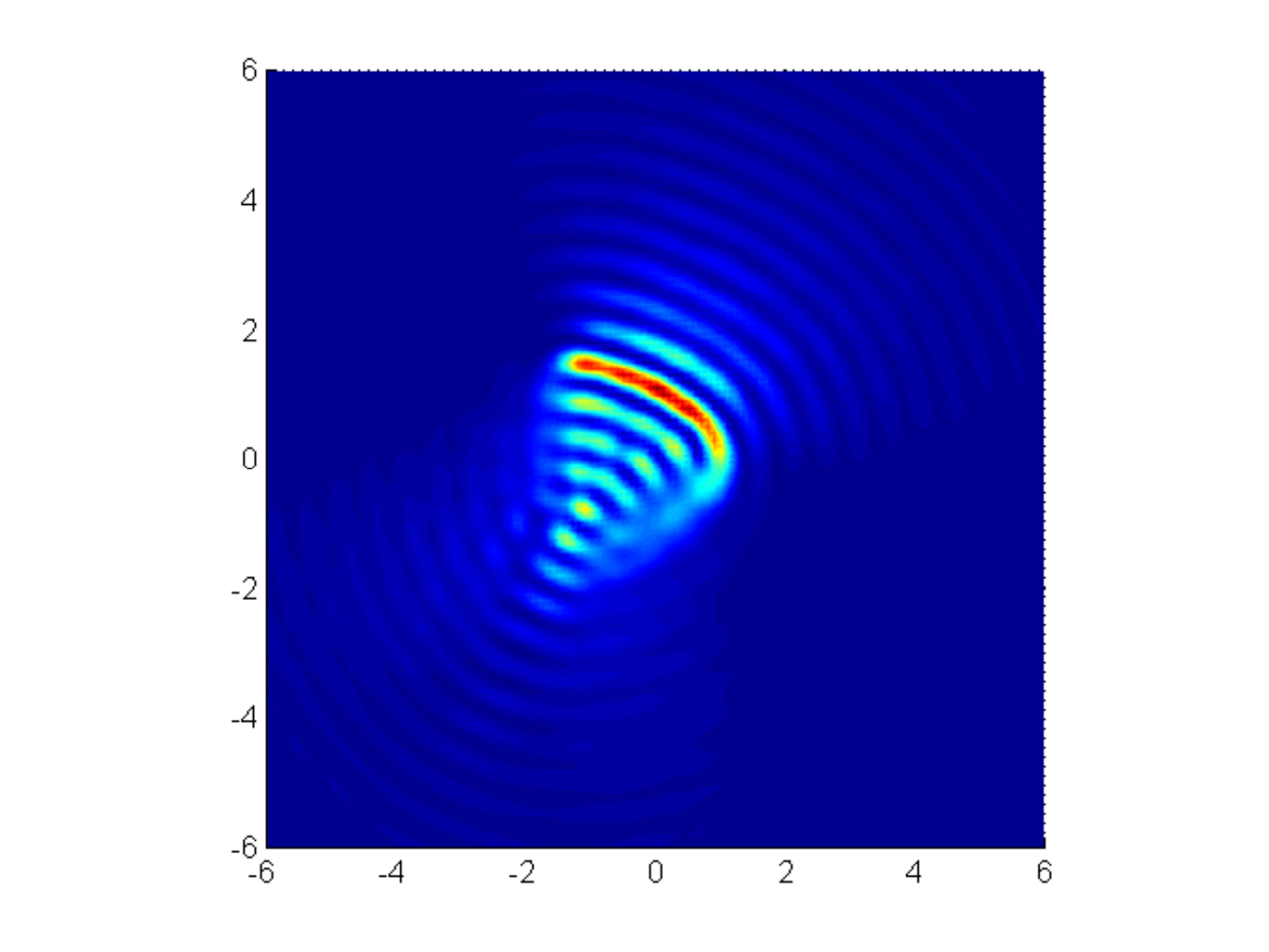}}
  \subfigure[\textbf{$I^{(2)}_{full}(z)$}]{
    \includegraphics[width=2in]{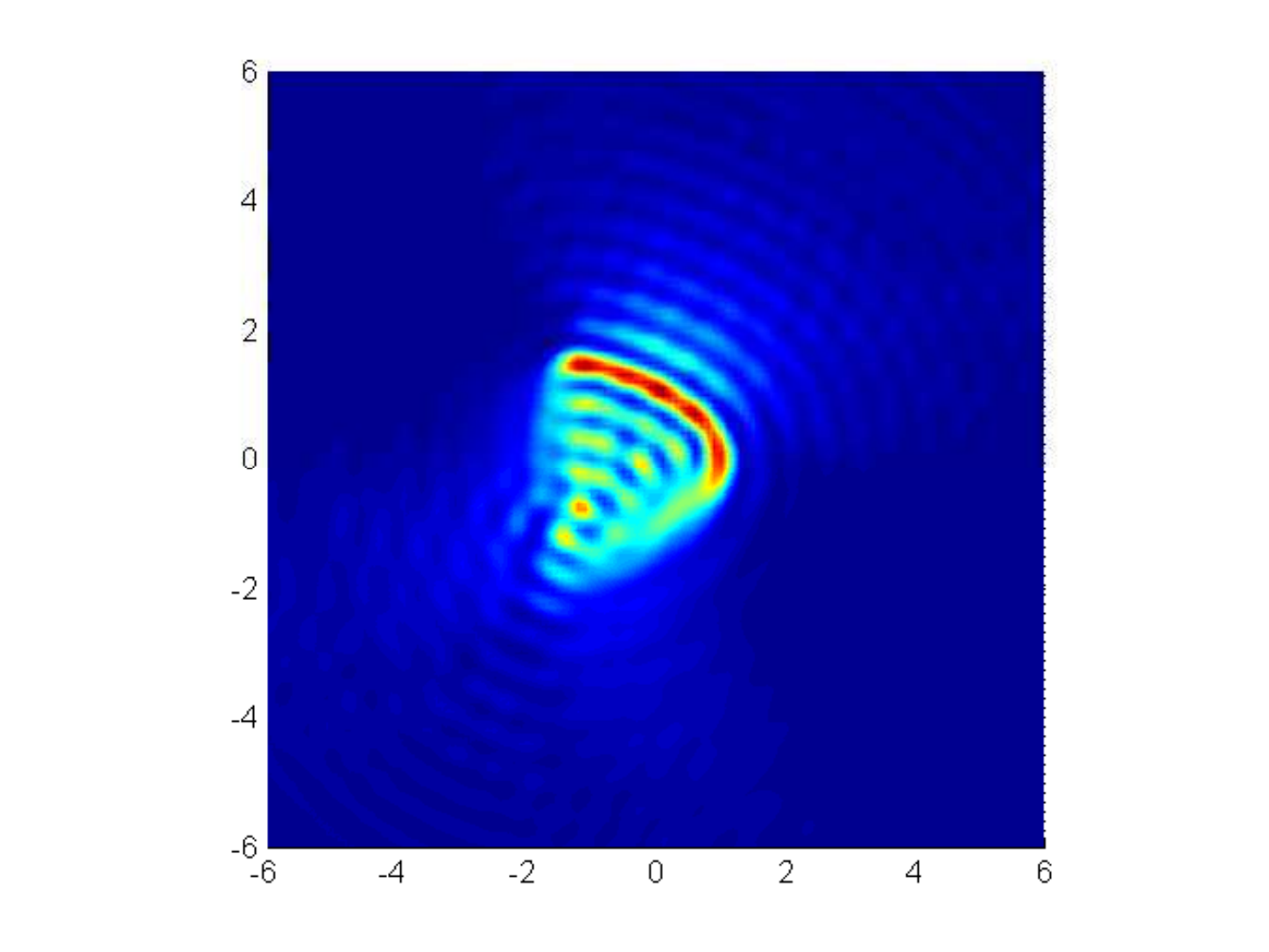}}
  \subfigure[\textbf{$I^{(3)}_{full}(z)$} ]{
    \includegraphics[width=2in]{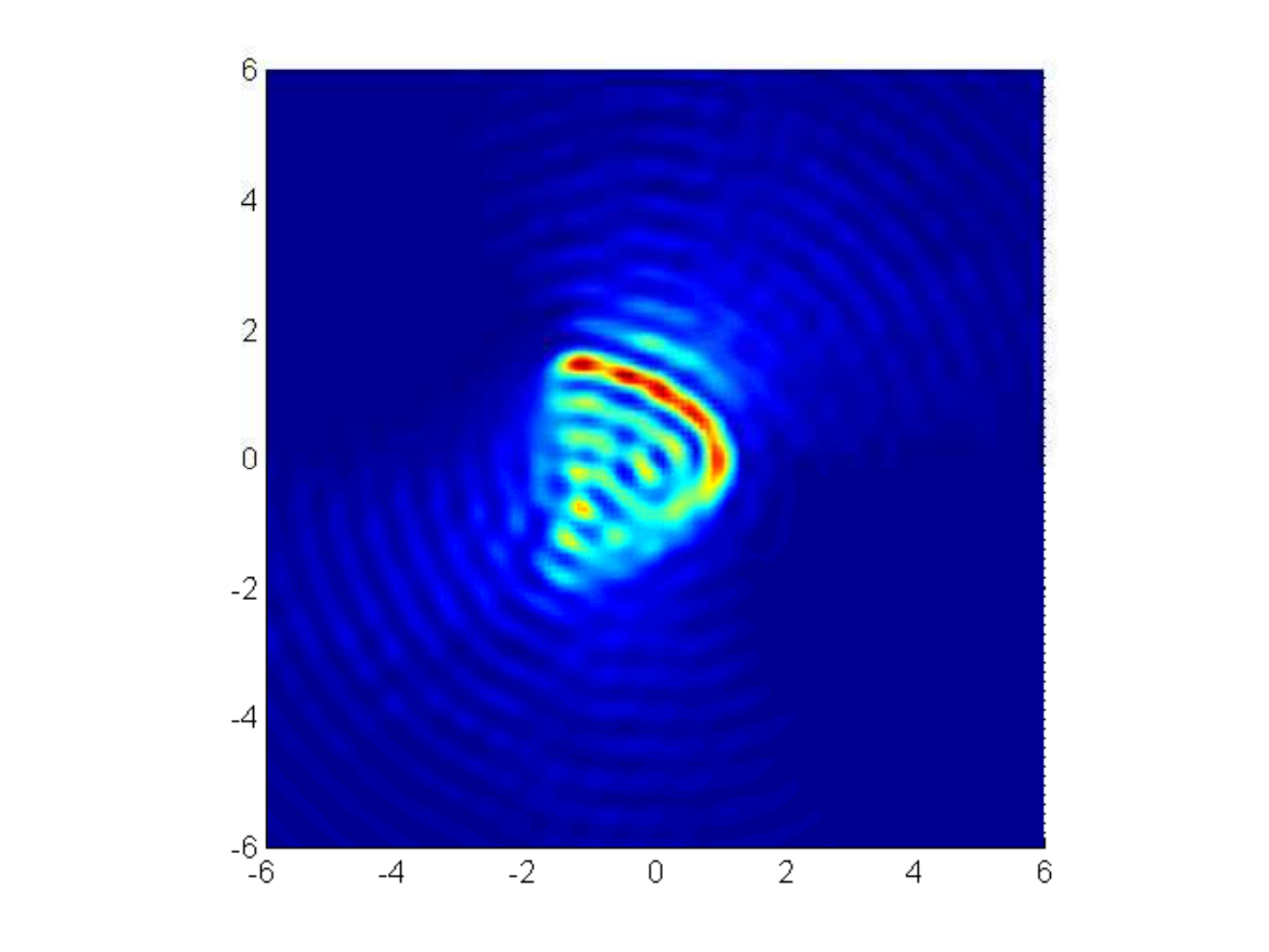}}
  \subfigure[\textbf{$I_{limit}(z)$}]{
    \includegraphics[width=2in]{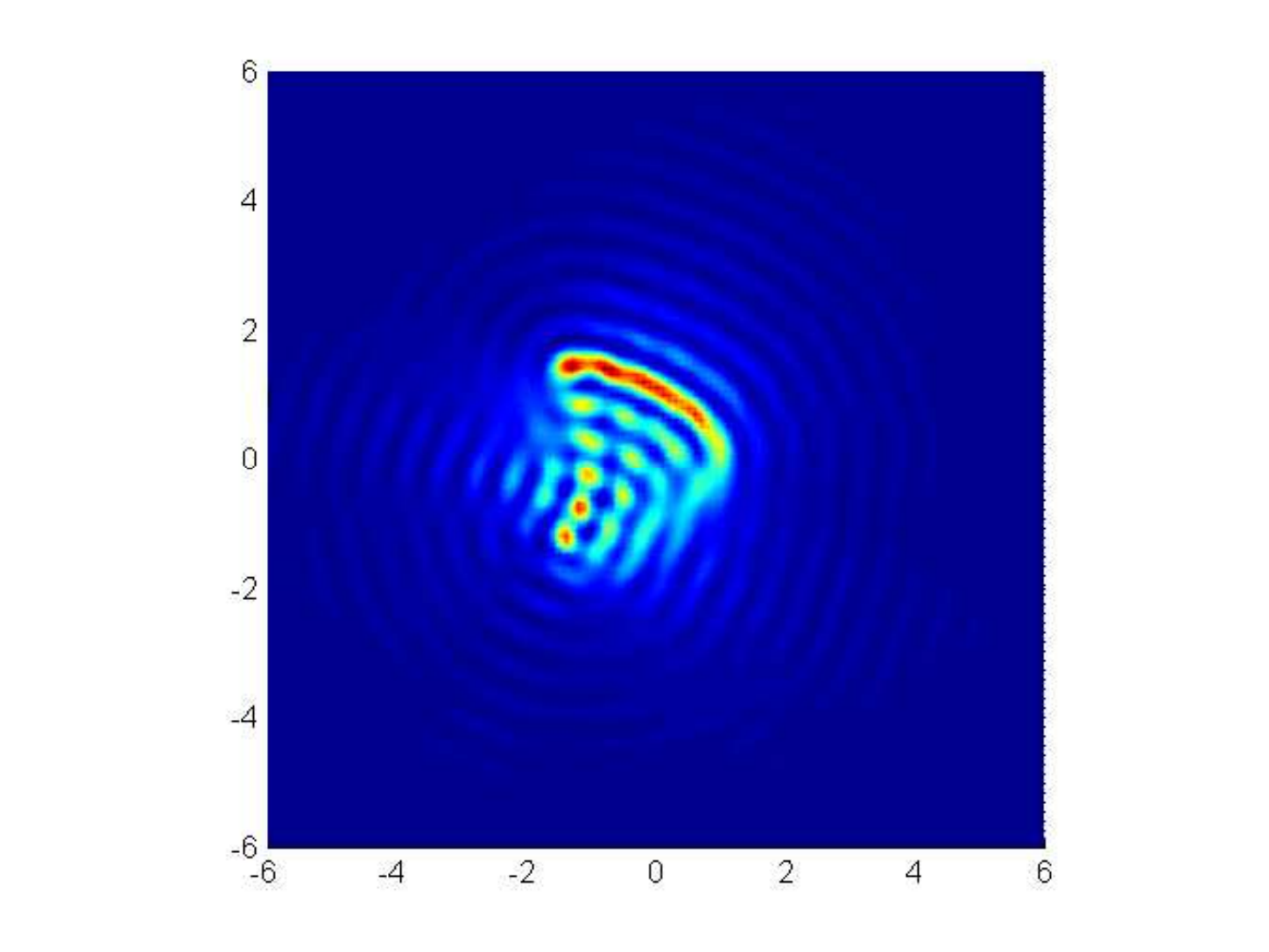}}
  \subfigure[\textbf{$I^{(2)}_{full}(z)$}  ]{
    \includegraphics[width=2in]{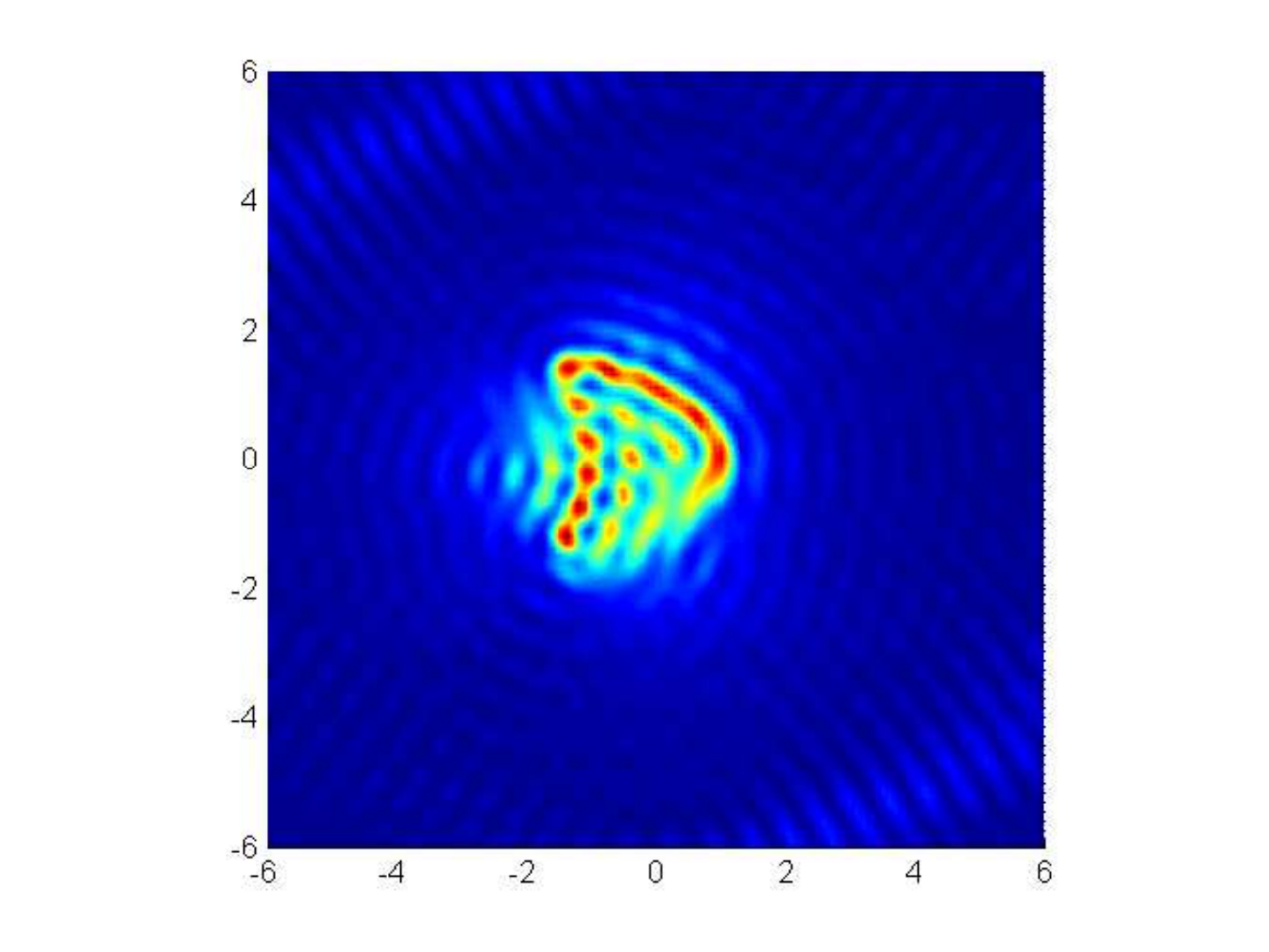}}
  \subfigure[\textbf{$I^{(3)}_{full}(z)$} ]{
    \includegraphics[width=2in]{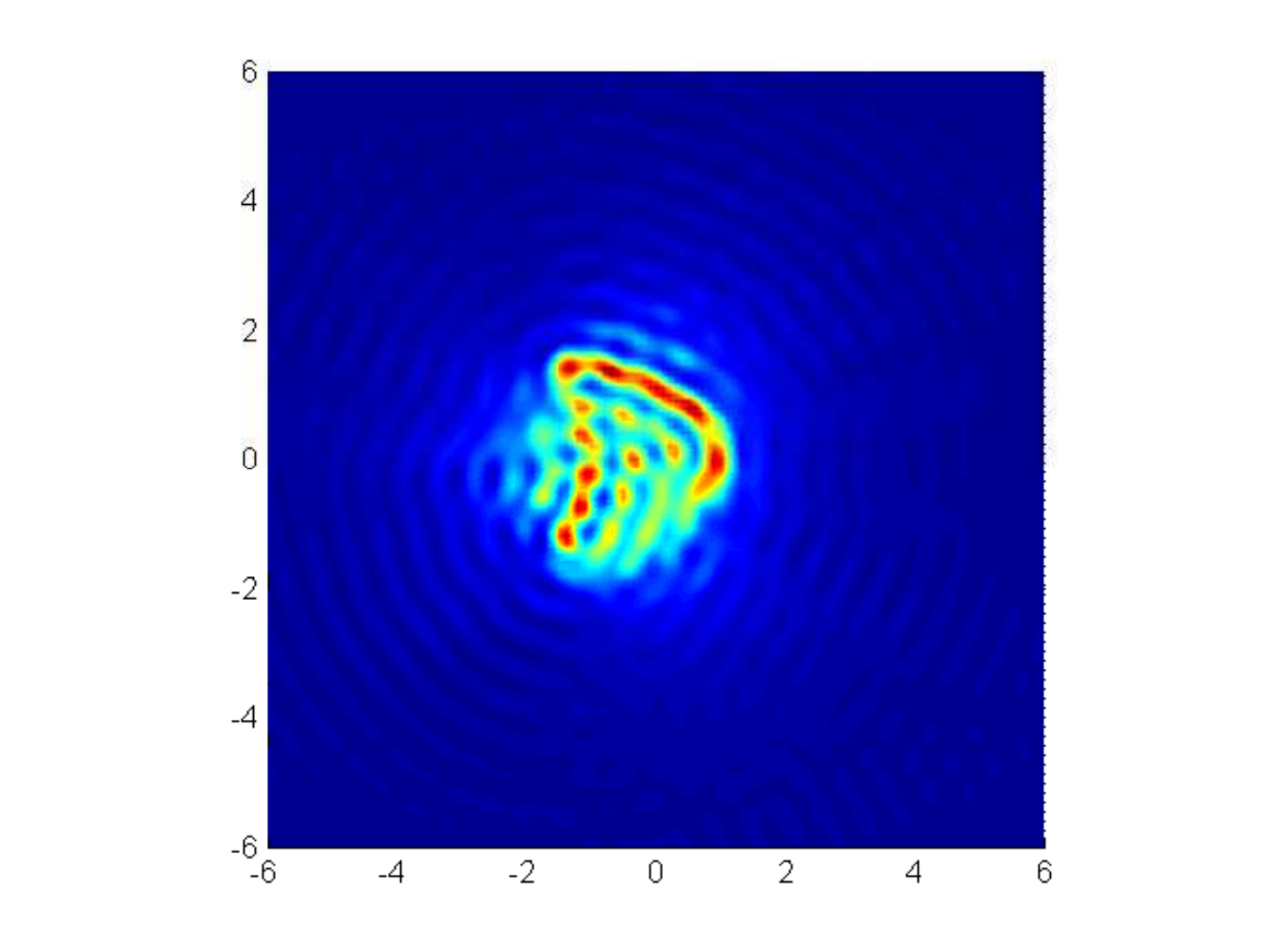}}
\caption{Shape and location reconstructions for kite by the direct sampling method.
Top row: $\phi\in (0,\pi/2)$; Middle row: $\phi\in (0,2\pi/3)$; Bellow row: $\phi\in (0,\pi)$;.}
\label{samplingmethodskite}
\end{figure}

\begin{figure}[htbp!]
  \centering
  \subfigure[\textbf{$I_{limit}(z)$}]{
    \includegraphics[width=2in]{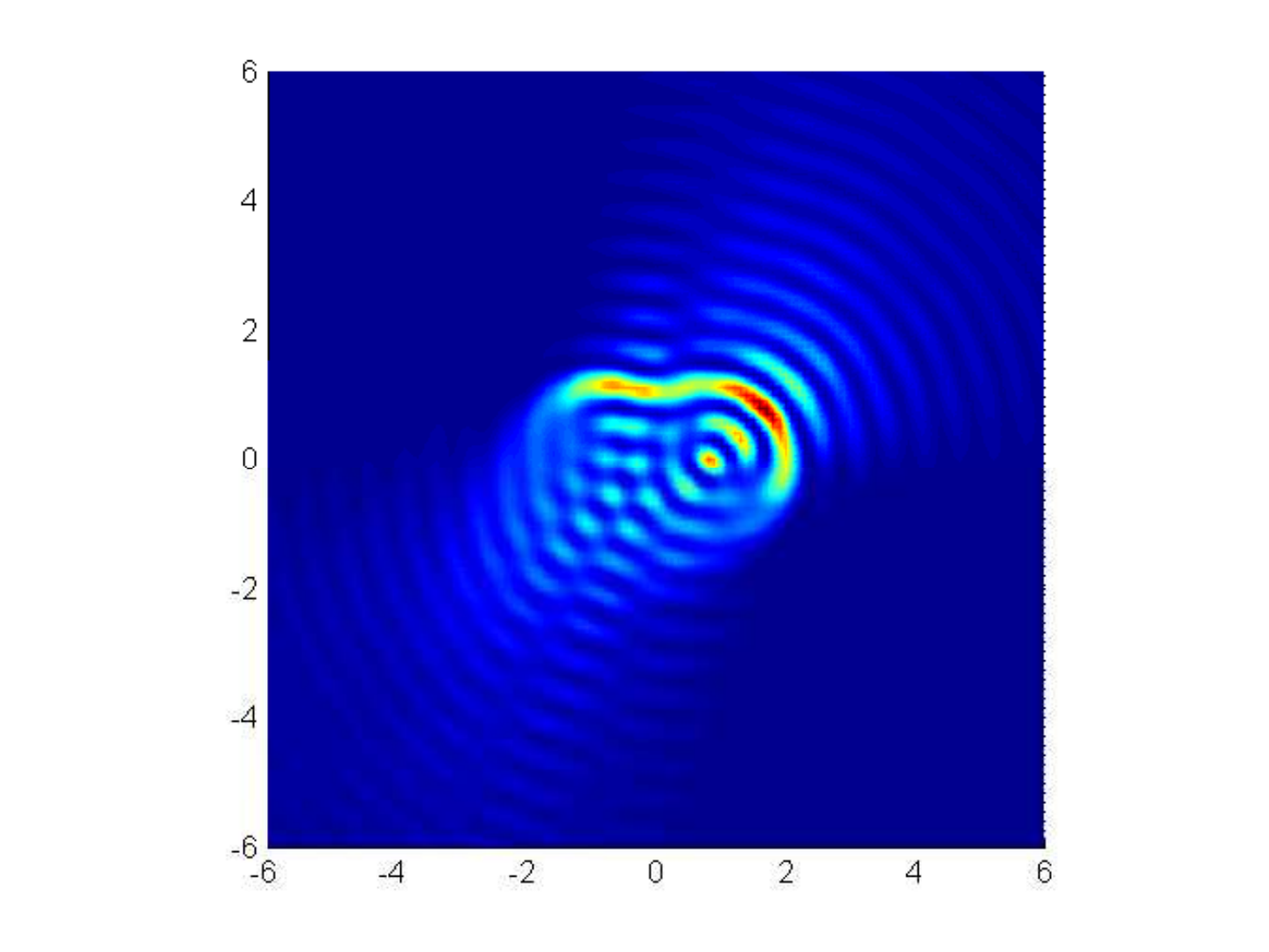}}
  \subfigure[\textbf{$I^{(2)}_{full}(z)$}  ]{
    \includegraphics[width=2in]{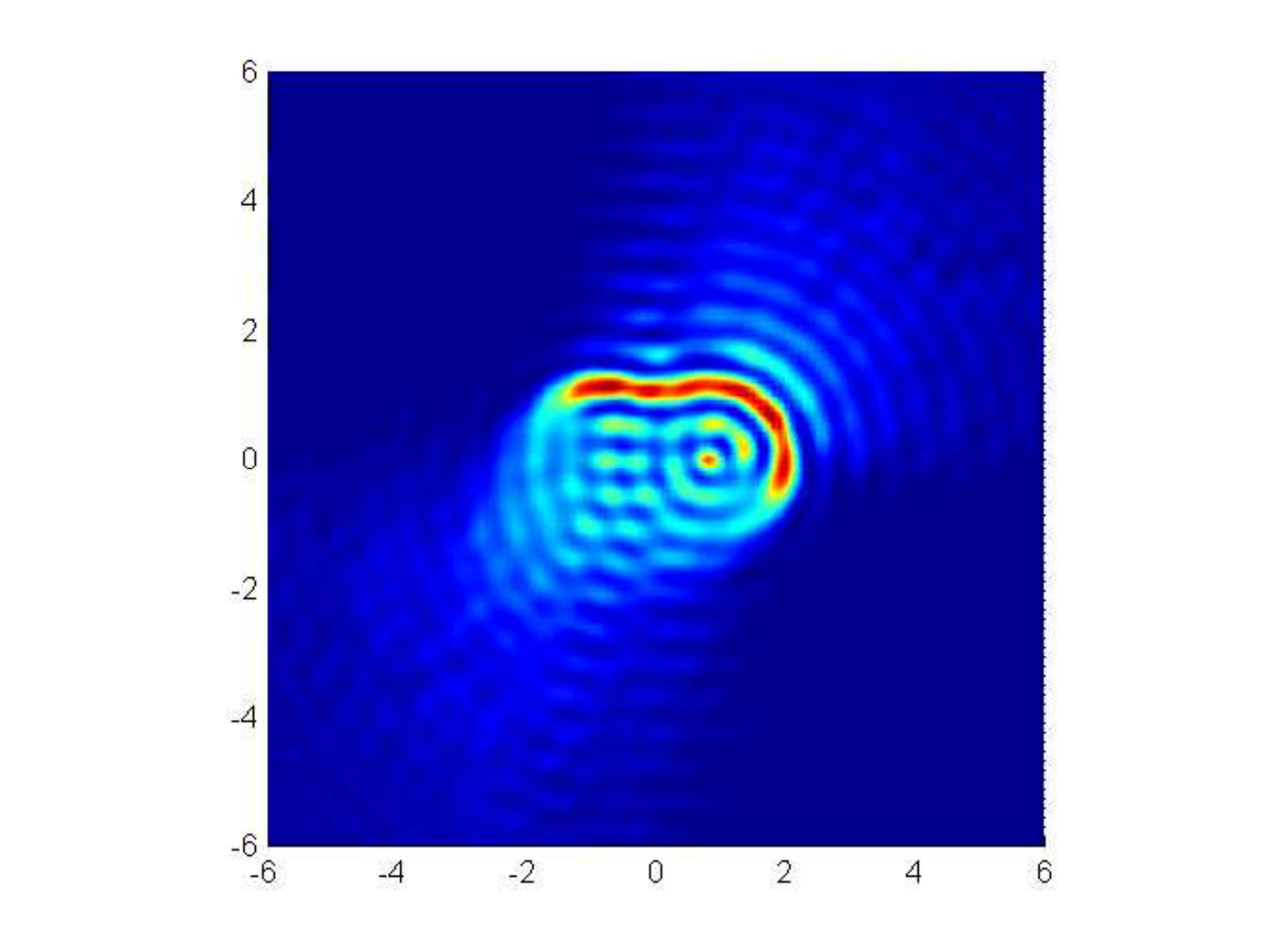}}
  \subfigure[\textbf{$I^{(3)}_{full}(z)$} ]{
    \includegraphics[width=2in]{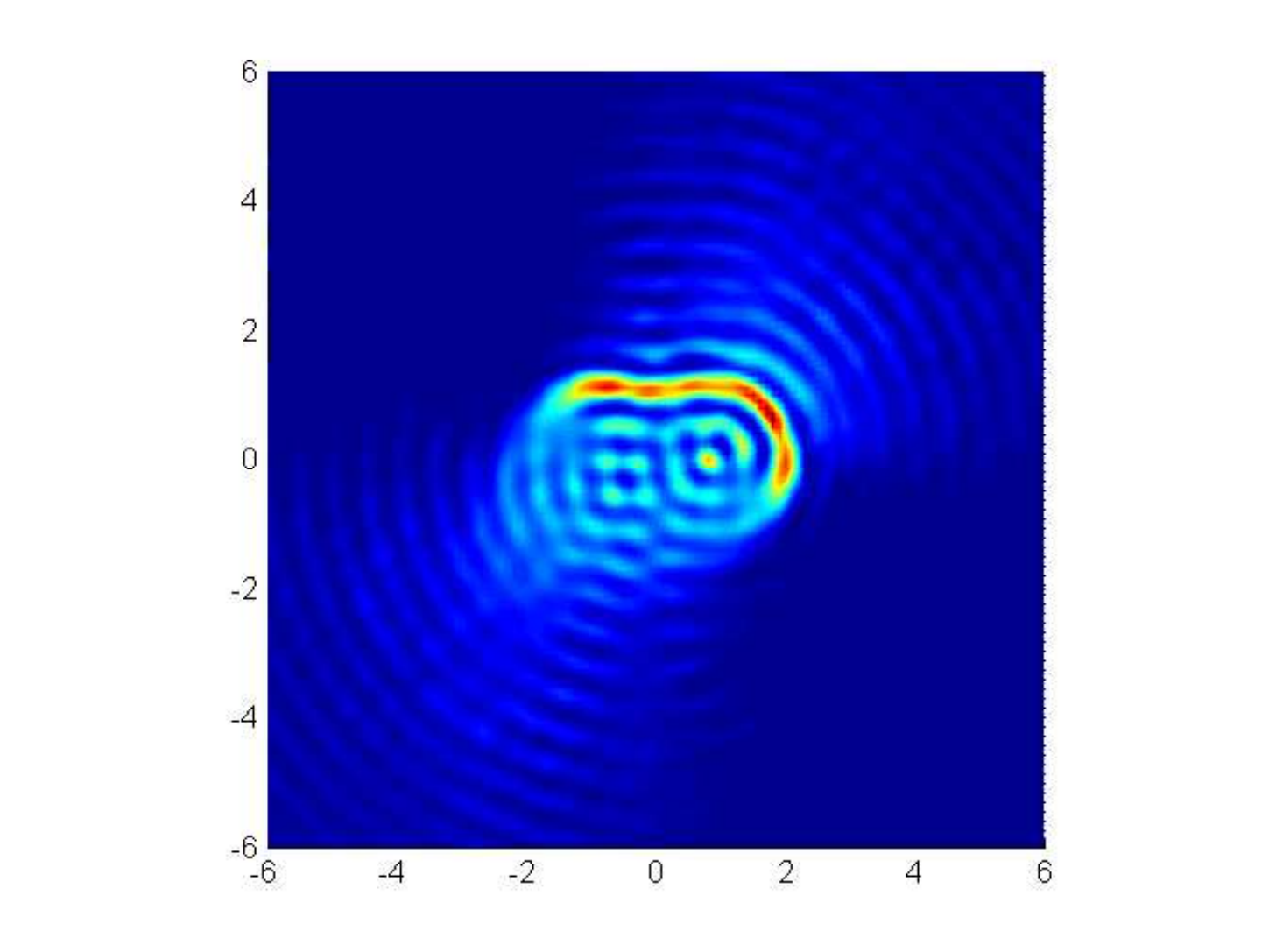}}
  \subfigure[\textbf{$I_{limit}(z)$}]{
    \includegraphics[width=2in]{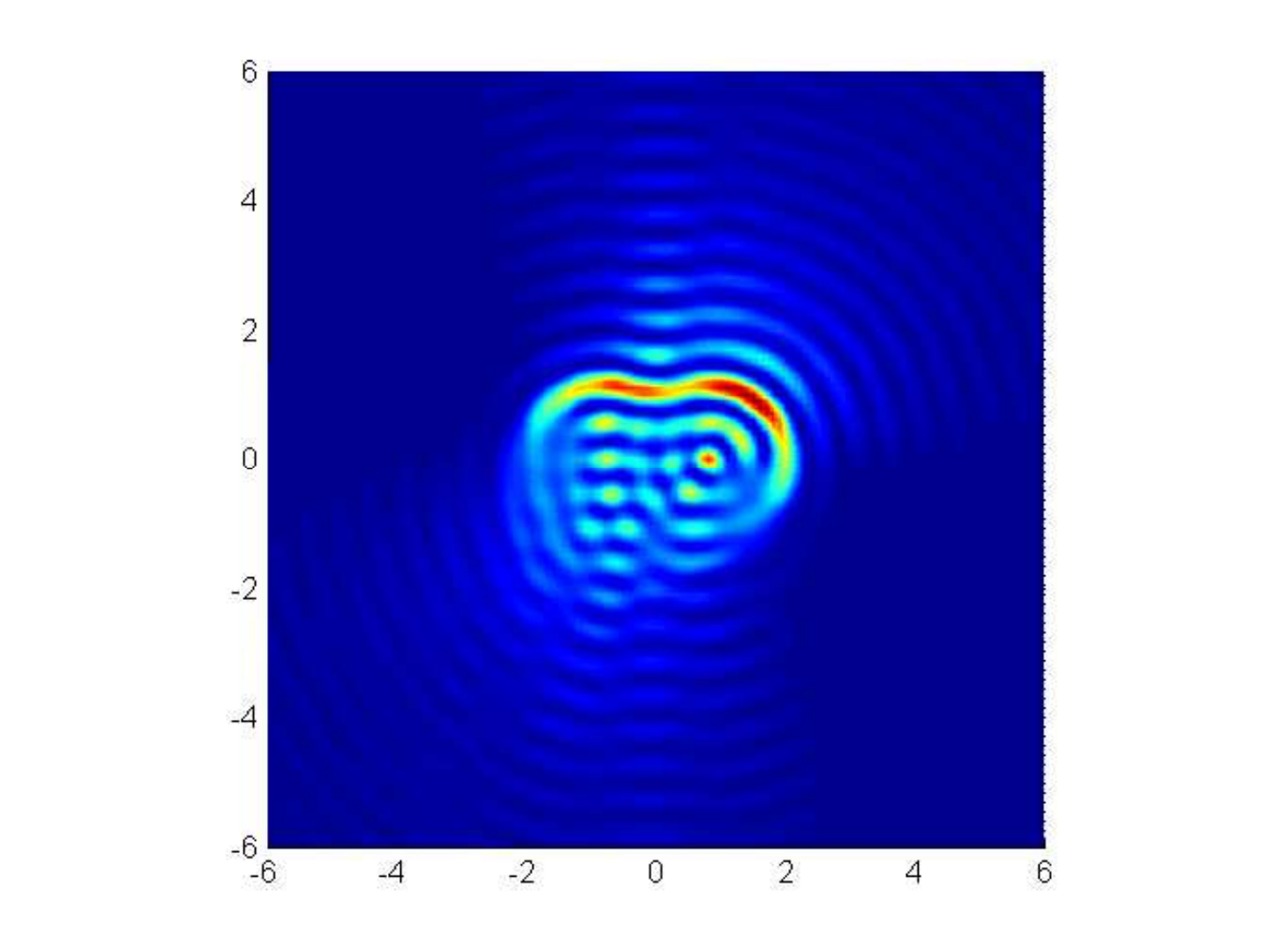}}
  \subfigure[\textbf{$I^{(2)}_{full}(z)$}  ]{
    \includegraphics[width=2in]{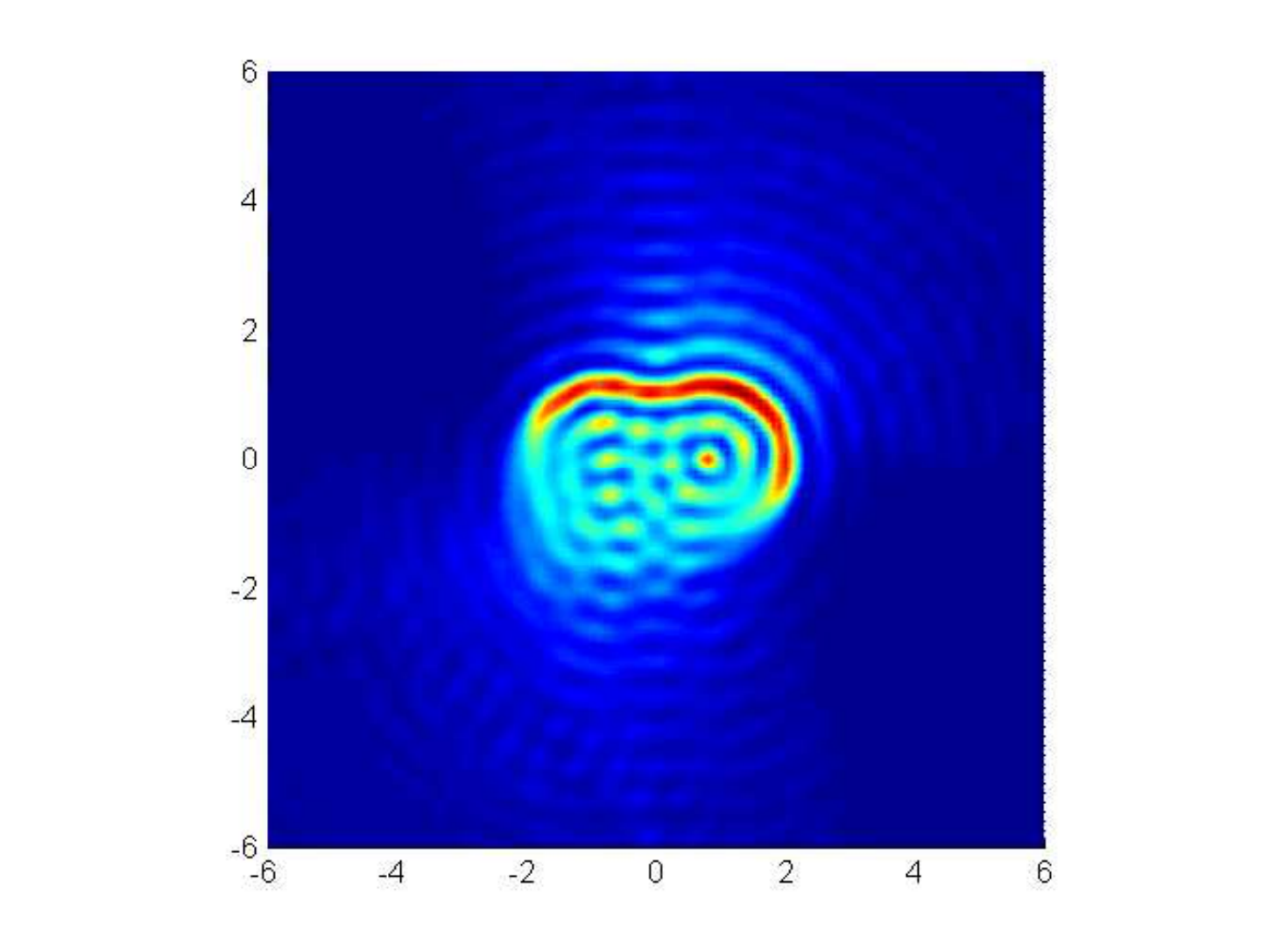}}
  \subfigure[\textbf{$I^{(3)}_{full}(z)$} ]{
    \includegraphics[width=2in]{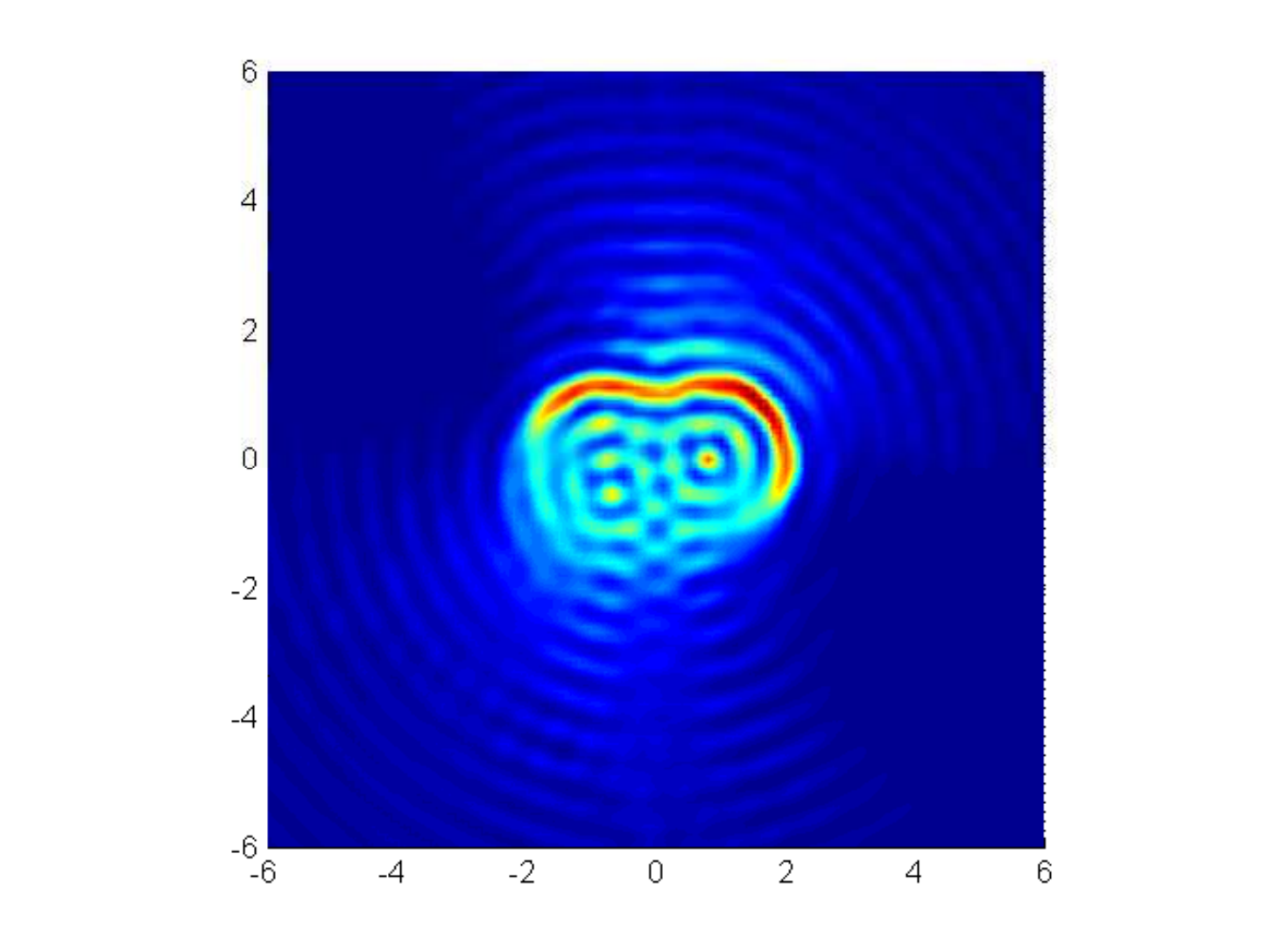}}
  \subfigure[\textbf{$I_{limit}(z)$}]{
    \includegraphics[width=2in]{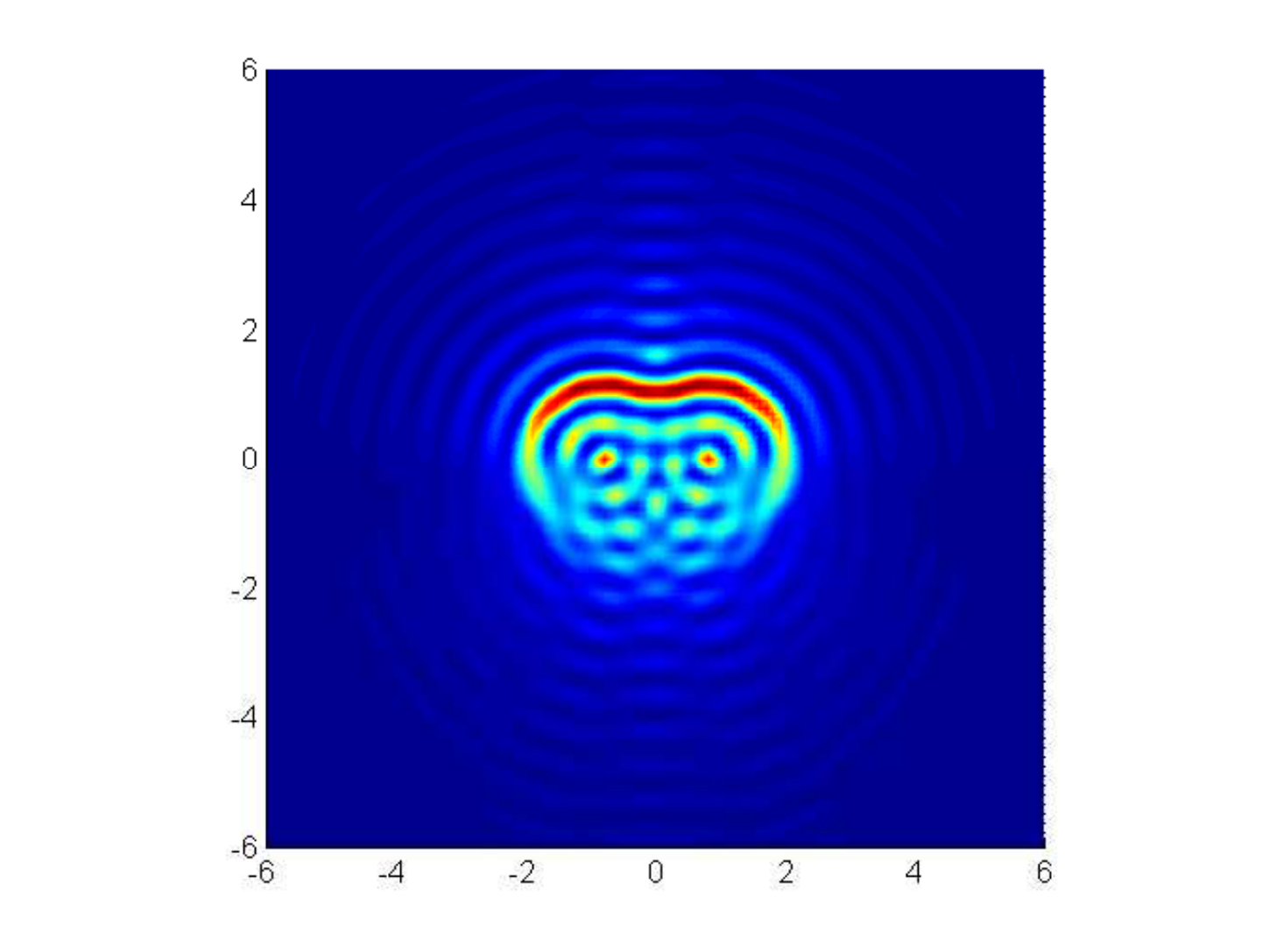}}
  \subfigure[\textbf{$I^{(2)}_{full}(z)$}  ]{
    \includegraphics[width=2in]{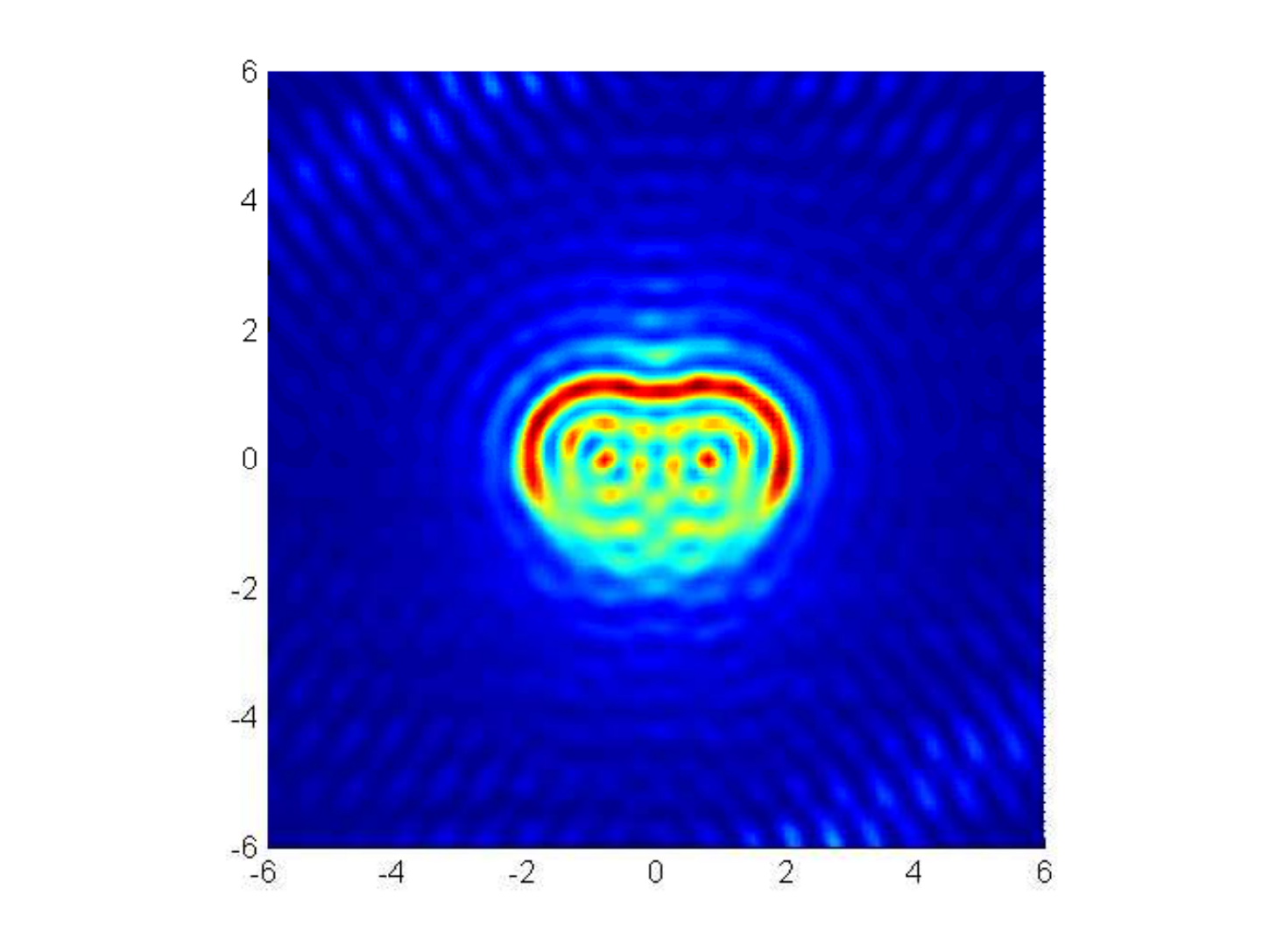}}
  \subfigure[\textbf{$I^{(3)}_{full}(z)$} ]{
    \includegraphics[width=2in]{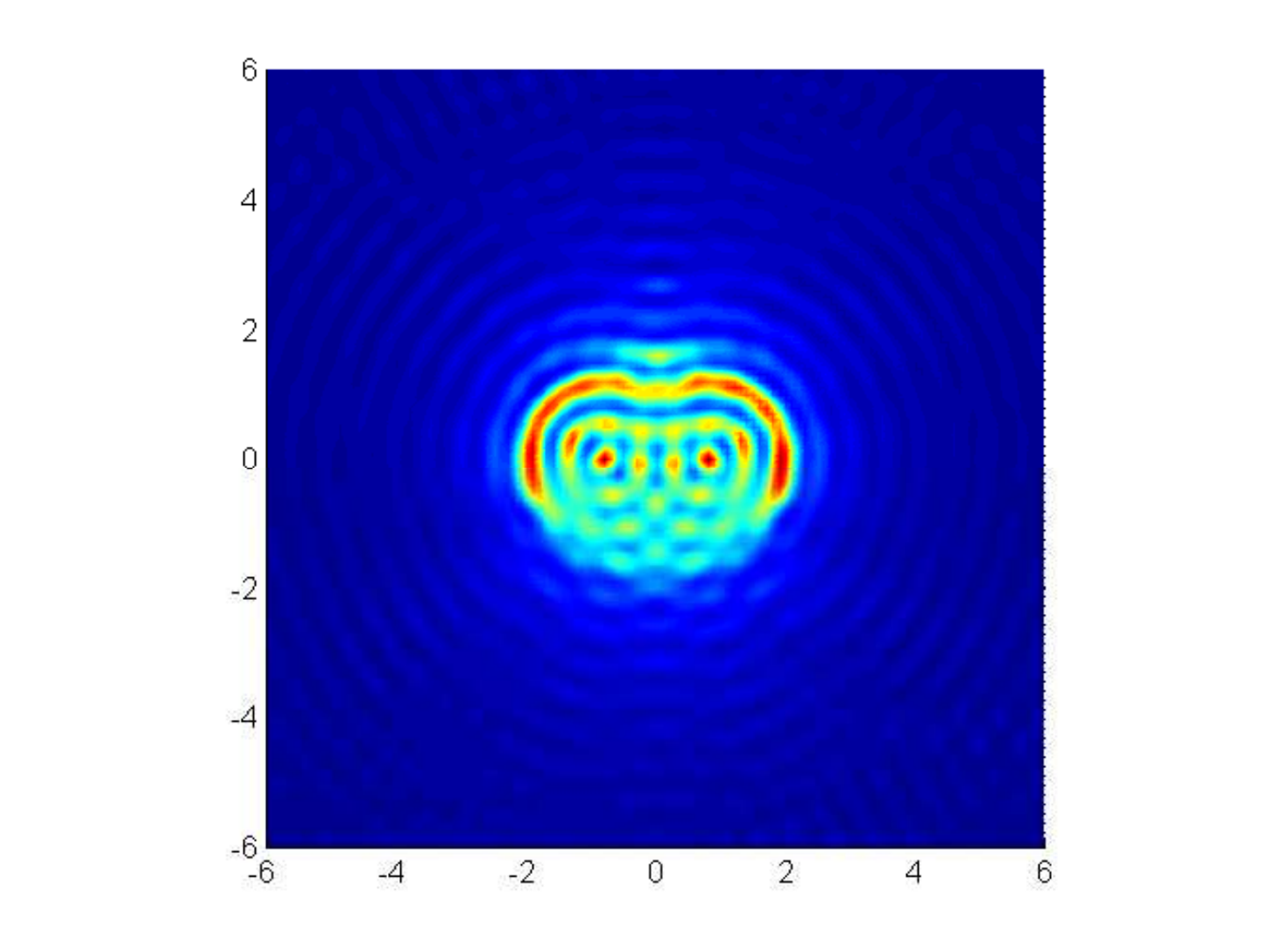}}
\caption{Shape and location reconstructions for peanut by the direct sampling method.
Top row: $\phi\in (0,\pi/2)$; Middle row: $\phi\in (0,2\pi/3)$; Bellow row: $\phi\in (0,\pi)$;.}
\label{samplingmethodsPeanut}
\end{figure}

Finally, the recovered data is tested using the factorization method proposed by Kirsch \cite{Kirsch98, KirschGrinberg}. To our knowledge, the factorization method has not been established for limited aperture data. However, the factorization method applies using the recovered {\em full-aperture} data $\mathbb{F}^{(ii)}_{full}$, $ii=2,3$.
Let $\{(\sigma_n, \psi_n): n = 1,...,2m \}$ represent the eigensystem of the matrix $F_\sharp$ given by
\ben
F_\sharp := |\Re (\mathbb{F}^{(ii)}_{full})| + |\Im (\mathbb{F}^{(ii)}_{full})|.
\enn
We then define the indicator function
\ben
I^{(ii)}_{FM}(z) : = \Biggl[\sum_{n=1}^{2m} \frac{|\phi_z^* \psi_n|^2}{|\sigma_n|}\Biggr]^{-1},\quad ii=2,3,
\enn
where
$\phi_z = (e^{-ik\theta_1 \cdot z}, e^{-ik\theta_2 \cdot z}, \dots, e^{-ik\theta_{2m} \cdot z})^\top \in  \C^{2m}$.
Although the sum is finite, we expect the values of $I_{FM}(z)$ to be much smaller for the points belonging to $\Om$ than for those lying in the exterior $\R^2\ba\ov{D}$.
Figure \ref{FMkite} shows the corresponding results with respect to measurements in $(0, \pi/2)$.

\begin{figure}[htbp!]
  \centering
  \subfigure[\textbf{$I^{(2)}_{FM}(z)$}]{
    \includegraphics[width=2in]{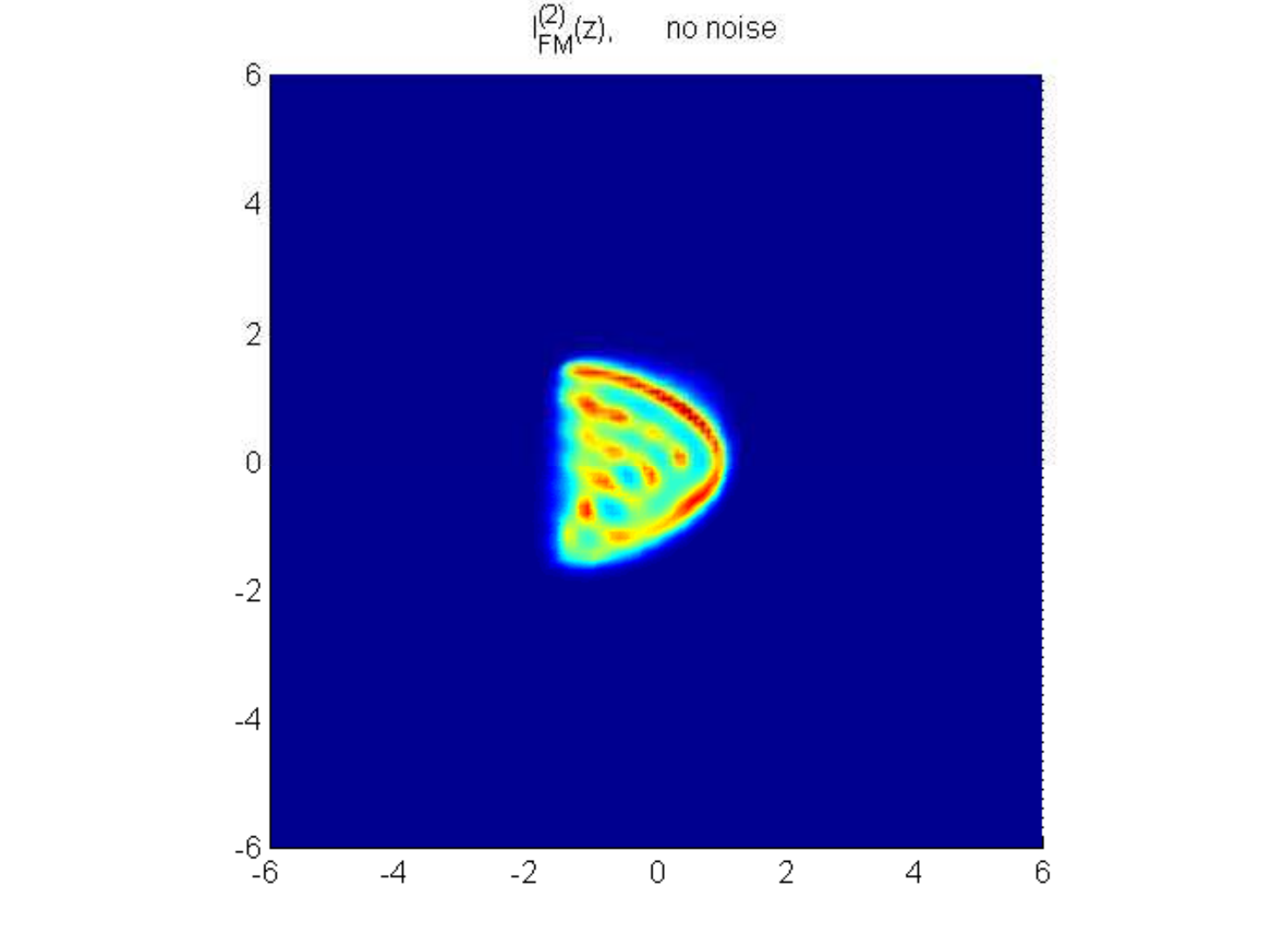}}
  \subfigure[\textbf{$I^{(3)}_{FM}(z)$}]{
    \includegraphics[width=2in]{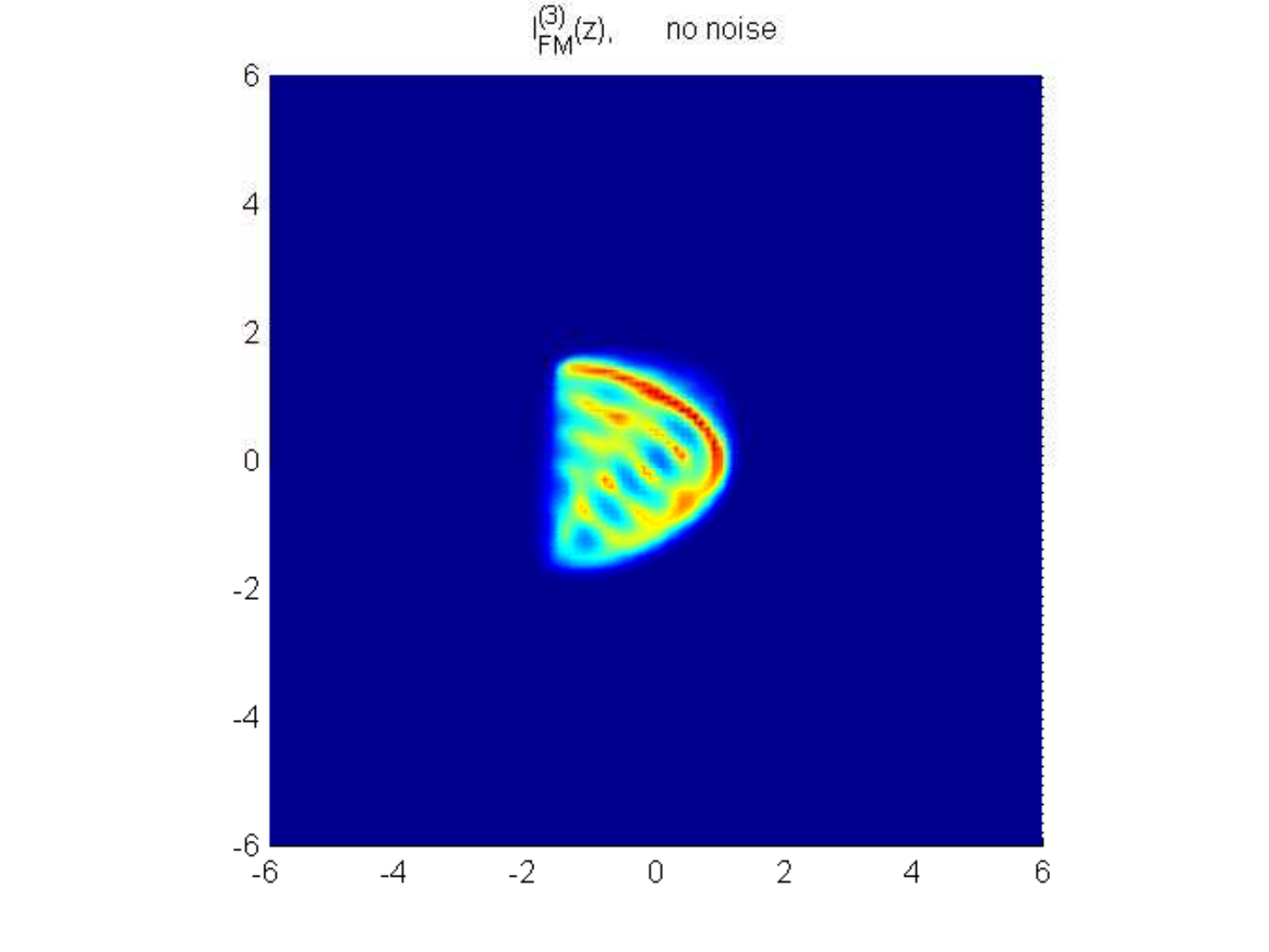}}\\
  \subfigure[\textbf{$I^{(2)}_{FM}(z)$}]{
    \includegraphics[width=2in]{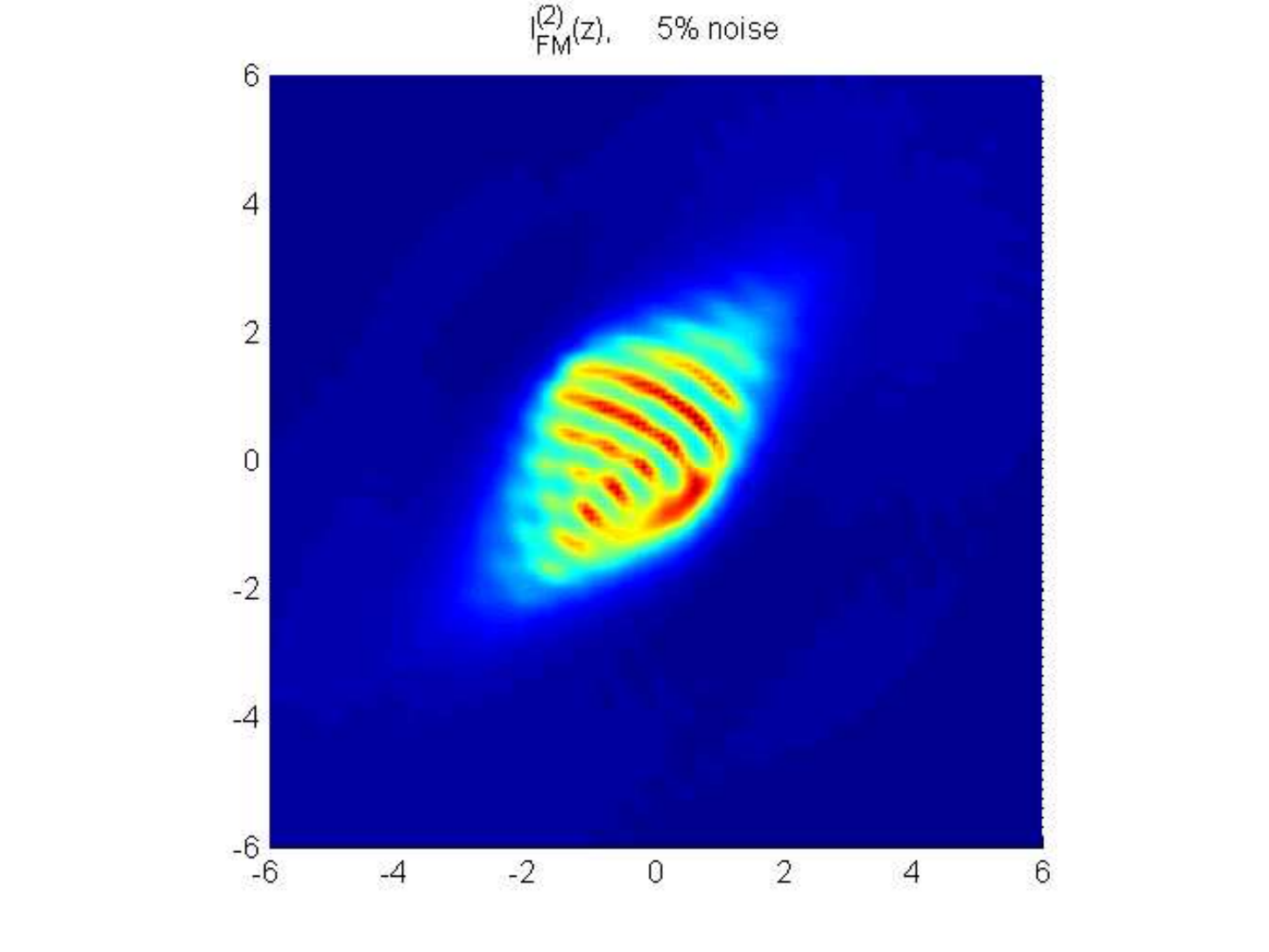}}
  \subfigure[\textbf{$I^{(3)}_{FM}(z)$}]{
    \includegraphics[width=2in]{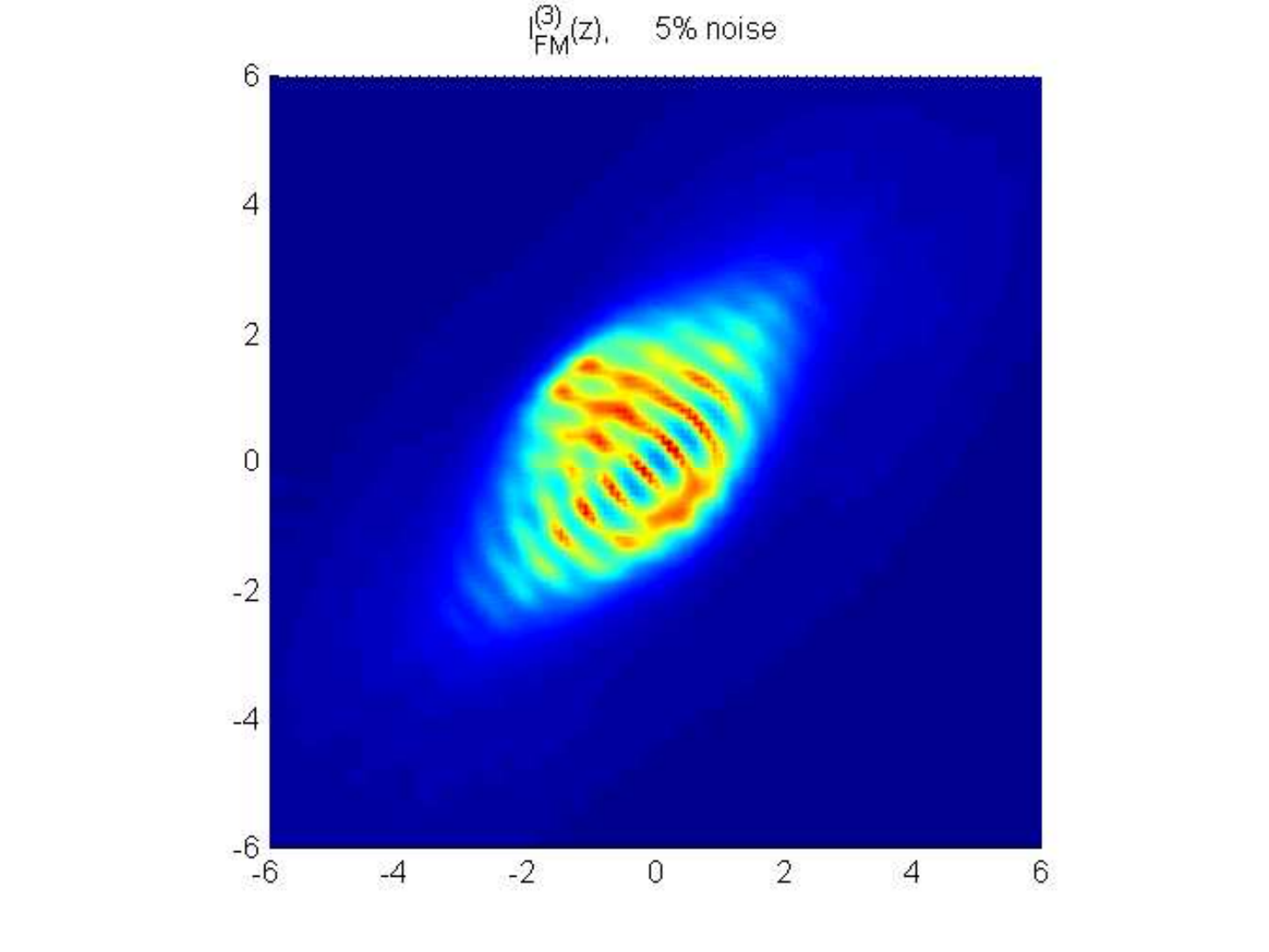}}
\caption{Reconstructions by the factorization method using recovered data. The observation angle $\phi\in (0,\pi/2)$. (a)-(b): without noise; (c)-(d): 5\% noise.}
\label{FMkite}
\end{figure}

\section{Concluding Remarks}
\label{sec5}
\setcounter{equation}{0}

The {\em limited-aperture} problems arise in various areas of practical applications such as radar, remote sensing, geophysics, and nondestructive testing.
It is well known that the illuminated part can be reconstructed well, while the shadow domain fails to be recovered.
In this paper, based on the PDE theory for scattering problems, we introduce two techniques to recover the missing data that can not be measured directly.
Using the recovered {\em full-aperture} data, a direct sampling method proposed in a recent paper \cite{LiuIP17} and the factorization method
yield satisfactory reconstructions.

We conclude with some future works.
\begin{itemize}
  \item The data recover techniques need to solve the ill-posed equations.
        We have used the Tikhonov regularization with regularization parameter $\alpha=10^{-2}$.
        The recovered data get worse as the direction moves further away from the measurable directions.
        A fast and stable method for solving the ill-posed equations is highly desired.
  \item Of greater practical importance would be the case that {\em limited-aperture} is not only for observation directions, but also the incident directions.
  \item It would be interesting and useful to consider the buried objects, where the measurements are only available in the upper half space.
\end{itemize}

\section*{Acknowledgements}

The research of X. Liu is supported by the Youth Innovation Promotion Association CAS and the NNSF of China under grant 11571355.
The research of J. Sun is partially supported by NSF DMS-1521555 and NSFC Grant (No. 11771068).

\end{document}